\documentclass[12pt]{amsart}

\usepackage[utf8]{inputenc}
\usepackage{amsfonts}
\usepackage{amsthm}
\usepackage{amssymb}
\usepackage{amsmath}
\usepackage{amscd}
\usepackage{latexsym,dsfont}
\usepackage{bbm}
\usepackage{mathrsfs}

\usepackage{times}
\usepackage{microtype}
\usepackage{setspace}
\usepackage[margin=1.1in]{geometry}

\usepackage{cite}

\usepackage[colorlinks=true, pdfstartview=FitV, linkcolor=blue,
citecolor=blue, urlcolor=blue]{hyperref}
\usepackage{xcolor}

\newtheorem{theorem}{Theorem}[section]
\newtheorem{lemma}[theorem]{Lemma}

\newtheorem{cor}[theorem]{Corollary}

\newtheorem{defn}[theorem]{Definition}

\theoremstyle{definition}

\theoremstyle{remark}
\newtheorem{remark}[theorem]{Remark}

\numberwithin{equation}{section}
 \allowdisplaybreaks

\def\XXint#1#2#3{{\setbox0=\hbox{$#1{#2#3}{\int}$}
     \vcenter{\hbox{$#2#3$}}\kern-.5\wd0}}

\DeclareMathOperator{\Div}{div}

\newcommand{\op}{{\mathrm{op}}}

\DeclareMathOperator{\lip}{\mathrm{Lip}}

\newcommand{\R}{\mathbb{R}}
\newcommand{\rn}{{\mathbb{R}^n}}

\newcommand{\loc}{\mathrm{loc}}

\newcommand{\bphi}{\boldsymbol{\phi}}
\newcommand{\bpsi}{\boldsymbol{\psi}}

\newcommand{\vecf}{\mathbf f}
\newcommand{\vecg}{\mathbf g}

\newcommand{\vect}{\mathbf t}

\newcommand{\vecu}{\mathbf u}
\newcommand{\vecw}{\mathbf w}

\title[Solutions to degenerate $p$-Poisson Equations]
{Sobolev Inequalities and the Existence of Solutions to Degenerate $p$-Poisson Equations}

\author[Cruz Uribe, Dal, Rodney] {David Cruz-Uribe OFS,  Feyza Elif Dal, Scott Rodney}

\address{David Cruz-Uribe, OFS \\
Dept. of Mathematics \\
University of Alabama \\
 Tuscaloosa, AL 35487, USA}

\email{dcruzuribe@ua.edu}

\address{Feyza Elif Dal \\
Dept. of Mathematics \\
Y\i ld\i z Technical University \\
Esenler, Istanbul, 34220 Davutpasha, Turkey
}
\email{feyzadal@hotmail.com}

\address{Scott Rodney\\
Dept. of Mathematics, Physics and Geology \\ 
Cape Breton University \\
Sydney, NS B1Y3V3, CA} 

\email{scott\_rodney@cbu.ca}

\thanks{The first author is partially supported by a Simons Foundation
  Travel Support for Mathematicians Grant and by NSF Grant DMS-2349550.
  The second author is supported by the TUBITAK 2211-E Domestic Direct Doctorate Scholarship Program and 2214-A International Research Fellowship Programme for PhD Students. The third  author is partially supported by Natural Sciences and Engineering Research Council of Canada grants DG 2026-06169 and DDG 2024-00029.  This project is supported by  TUBITAK, the Scientific and Technological Research Council of T\"urkiye through a 2501 Joint Research Program grant 223N112.}

\keywords{degenerate elliptic equations, $p$-Laplacian, Sobolev inequalities }

\subjclass[2010]{35A23, 35J70, 35J92, 46E35}

\begin{document}

\begin{abstract}
    In this paper we study an equivalence between the existence of a Sobolev inequality without gain,
    \[\|\varphi\|_{L^p(v,\Omega)} \leq S(p,1) \| \sqrt{Q}\nabla \varphi\|_{L^p(\Omega)},\]
    that holds for smooth functions of compact support and the existence of a degenerate weak solution $(u,\nabla u)\in QH^{1,p}_0(v,\Omega)$ to a Dirichlet problem for the $p$-Laplacian with a zero order term: 
    \begin{align*}
\left\{\begin{array}{rclr}
-v^{-1}\Div(|\sqrt{Q}\nabla u|^{p-2}Q\nabla u)+F|u|^{p-2}u&= &|f|^{p-2}f - v^{-1}\Div(v|g|^{p-2}g\vect), & x \in \Omega, \\
 u &=&0, & x \in \partial \Omega,
 \end{array}
 \right.
\end{align*}
More precisely, we use the Sobolev inequality to prove the existence of a degenerate weak solution to this equation and then use the existence of such a solution to produce a Sobolev inequality.  Moreover, we show that solutions are unique.
\end{abstract}

\maketitle

\section{Introduction}
\label{section:intro}

In this paper we give a characterization of when degenerate weak solutions to the Dirichlet problem for the  $p$-Laplacian,
\begin{equation}\label{eqn:dirichlet-intro}
\begin{cases}
-v^{-1}\Div(|\sqrt{Q}\nabla u|^{p-2}Q\nabla u)+F|u|^{p-2}u = |f|^{p-2}f - v^{-1}\Div(v|g|^{p-2}g\vect), & x \in \Omega, \\
 \hspace{2.65in} u =0, & x \in \partial \Omega,
 \end{cases}
\end{equation}
exist and are unique.  Here $v$ is a weight (that is, a non-negative, measurable function) and $Q$ is an $n\times n$, symmetric, positive semidefinite, measurable matrix function.  (For brevity we defer precise assumptions on the coefficients and data to below.)  We show that solutions exist and are unique if and only if there exists a global, degenerate Sobolev inequality without gain that holds for all smooth $\varphi$ with compact support:
\[ \|\varphi\|_{L^p(v,\Omega)} \leq S(p,1) \| \sqrt{Q}\nabla \varphi\|_{L^p(\Omega)}.  \]

\medskip

Before stating our precise theorem, we give some previous results for context.  If $v=1$ and $Q$ is the $n\times n$ identity matrix, then this is the classical $p$-Laplacian; the existence of solutions to this equation is well-known and follows from a general theorem of Lions~\cite[Theorem~2.1, p.~171]{MR259693}.  The degenerate equation was first considered in the linear case (i.e., when $p=2$) by Fabes, Kenig, and Serapioni~\cite{MR643158}, and for all $1<p<\infty$ by Modica~\cite{MR839035}.  They considered the simpler equation when $F=g=0$ and they assumed that $v$ was in the Muckenhoupt class $A_p$, and the matrix satisfied $Q=v\tilde{Q}$, where $\tilde{Q}$ was uniformly elliptic.  

The degenerate equation was next considered when the matrix $Q$ satisfies the two weight ellipticity condition
\begin{equation} \label{eqn:degen-elliptic}
    w(x) |\xi|^p \leq |\sqrt{Q(x)}\xi|^p \leq v(x) |\xi|^p, \quad \xi \in \R^n, \quad \text{ a.e. } x \in \Omega, 
\end{equation} 
where $v$ is doubling, $w\in A_p$, and there exists $\sigma>1$ such that $v$ and $w$ satisfy the balance condition: 
\[  \frac{r_1}{r_2} \bigg( \frac{v(B(x_1,r_1)}{v(B(x_2,r_2)}\bigg)^{\frac{1}{\sigma p}} 
\leq c \bigg( \frac{w(B(x_1,r_1)}{w(B(x_2,r_2)}\bigg)^{\frac{1}{p}}, \]
where $0<r_1<r_2$, $x_1,\,x_2\in \Omega$, and $B(x_1,r_1)\subset B(x_2,r_2)$. Regularity results for solutions to the homogeneous equation without a zero order term (i.e., $F=f=g=0$) were proved when $p=2$ by Chanillo and Wheeden~\cite{MR847996} and for $1<p<\infty$ by Ferrari~\cite{MR2228656}.  The existence of weak solutions was shown by the first author, Moen and Naibo~\cite{MR3011287}, using an abstract result from Kinderlehrer and Stampacchia~\cite{MR1786735}.

Regularity (boundedness, Harnack inequality, continuity) for the degenerate equation when $v=1$ was considered by the third author, Monticelii, and Wheeden~\cite{Monticelli-Rodney-Wheeden,MR2906551} as a special case of a more general quasilinear operator.  Existence of solutions to this equation when $f=g=0$ was shown by Chua and Wheeden~\cite{MR3710683} as a special case of a more general result.  More recently, existence and uniqueness for this equation when $p=2$ was proved by the authors, \c{C}etin, and Zeren~\cite[Theorem~3.6]{SC-DCU-FED-SR-Zeren}, and the boundedness of solutions to this equation (when $F=0$) was studied by the first and third authors and MacDonald~\cite{Cruz-Uribe-Rodney,Cruz-Uribe-MacDonald-Rodney}.

Finally, in~\cite{Cruz-Uribe-Rodney-Rosta} the first and third authors and Rosta showed that the existence of a solution to a related Neumann-type problem (with $F=g=0$ in the equation) was equivalent to the existence of a degenerate Poincar\'e inequality.

\medskip

We now state our main results.  We first state some basic assumptions; however, we will defer some more technical definitions to Section~\ref{section:prelim} below.  Throughout, $n$ will be the dimension of the underlying space $\R^n$.  The set $\Omega \subset \R^n$ will always be a bounded, connected, open set.  The weight $v$ will be non-negative and measurable, but is not necessarily even locally integrable.  The value $p$ will be fixed, $1<p<\infty$.  The matrix $Q$ will be a symmetric, positive semidefinite, measurable matrix function such that
\[ |Q|_\op = \sup_{|\xi|=1} |Q\xi| \in L^{\frac{p}{2}}_\loc(\Omega).  \]

\begin{defn}
    Given $1 <p<\infty$, the pair $(v,Q)$ is said to have the  $p$-Sobolev property  on $\Omega$ if there is a positive constant $S(p,1)$ such that for all $\varphi\in Q\lip_0(v,\Omega)$,
\begin{equation}\label{Sobolev prop}
    \bigg(\int_\Omega|\varphi|^{ p}\,vdx\bigg)^{\frac{1}{p}}
    \leq S(p,1) \bigg(\int_\Omega |\sqrt{Q}\nabla \varphi |^p \,dx\bigg)^{\frac{1}{p}}.
\end{equation}
    
\end{defn}

\begin{remark}
    As we will show below (see Lemma~\ref{lemma:approximation}), by a standard approximation argument, if $(v,Q)$ has the $p$-Sobolev property, then inequality~\eqref{Sobolev prop} holds for all $(u,\nabla u) \in QH_0^{1,p}(v,\Omega)$. 
\end{remark}

\medskip

Going forward, in our Dirichlet problem we will always  make the following assumptions:
\begin{itemize}
    \item the data functions $f,\,g \in L^p(v,\Omega)$;

    \item $F\in L^\infty(v,\Omega)$ and $F$ is non-negative;

    \item $\vect : \Omega \to \R^n$ is a degenerate subunit vector-field, i.e.,
    \begin{equation} \label{eqn:subunit}
    \|\vect \cdot \nabla \varphi \|_{L^p(v,\Omega)} \leq C(\vect) \|\nabla \varphi\|_{QL^p(\Omega)}. 
    \end{equation}
\end{itemize}

\begin{remark}
    The subuniticity of $\vect$ follows from the stronger pointwise inequality
    \[ |\vect(x)\cdot \xi| \leq \frac{1}{\sqrt{v(x)}} |\sqrt{Q(x)}\xi|, \quad \xi \in \R^n, \quad \text{ a.e. } x\in \Omega. \]
    The pointwise subuniticity condition when $v=1$ was introduced in~\cite{sawyer2006holder}; the more general condition for all $v$ was introduced in~\cite{SC-DCU-FED-SR-Zeren}.
\end{remark}

\begin{defn}
    Given $1< p<\infty$, the pair $(v,Q)$ is said to have the $p$-Dirichlet property on $\Omega$ if 
        for any $F$, $\vect$, $f$, $g$  as above, there exists a degenerate weak solution $(u,\nabla u)\in QH^{1,p}_0(v,\Omega)$ to the 
        Dirichlet problem
\begin{equation}\label{Dirichlet prob.}
\begin{cases}
-v^{-1}\Div(|\sqrt{Q}\nabla u|^{p-2}Q\nabla u)+F|u|^{p-2}u = |f|^{p-2}f - v^{-1}\Div(v|g|^{p-2}g\vect), & x \in \Omega, \\
 u =0, & x \in \partial \Omega. 
 \end{cases}
\end{equation}
Moreover, any degenerate weak solution $(u,\nabla u)\in QH^{1,p}_0(v,\Omega)$ of \eqref{Dirichlet prob.} is regular: that is, there is a positive constant $c_0$ such that
\begin{equation}\label{regularity}
    \|u\|_{L^p(v,\Omega)}\leq c_0\big[ \|f\|_{L^p(v,\Omega)} + \|g\|_{L^p(v,\Omega)} \big].
\end{equation}
\end{defn}

Our main result shows that  these two properties are equivalent.

\begin{theorem}\label{equivalence}
    Given $1<p<\infty$, the pair $(v,Q)$ has the $p$-Dirichlet property on $\Omega$ if and only if it has the $p$-Sobolev property  on $\Omega$.  Furthermore, in this case solutions to~\eqref{Dirichlet prob.} are unique.
\end{theorem}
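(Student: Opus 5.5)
We prove the two implications separately and then establish uniqueness; both directions rest on the weak formulation. By definition, $(u,\nabla u)\in QH^{1,p}_0(v,\Omega)$ is a degenerate weak solution of \eqref{Dirichlet prob.} if, for all test pairs $(\varphi,\nabla\varphi)\in QH^{1,p}_0(v,\Omega)$,
\[
\int_\Omega |\sqrt{Q}\nabla u|^{p-2}\langle Q\nabla u,\nabla\varphi\rangle\,dx+\int_\Omega F|u|^{p-2}u\varphi\,v\,dx=\int_\Omega |f|^{p-2}f\varphi\,v\,dx+\int_\Omega |g|^{p-2}g\,(\vect\cdot\nabla\varphi)\,v\,dx .
\]

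\emph{Sobolev $\Rightarrow$ Dirichlet.} Assume the $p$-Sobolev property, extended to all of $QH^{1,p}_0(v,\Omega)$ by Lemma~\ref{lemma:approximation}. Then $\|\sqrt{Q}\nabla u\|_{L^p}$ is an equivalent norm on the closed subspace $QH^{1,p}_0(v,\Omega)$, which is a reflexive Banach space since it embeds isometrically into $L^p(v)\times QL^p$. Define $\mathcal{A}:QH^{1,p}_0\to (QH^{1,p}_0)^*$ by the left side of the weak formulation, and the functional $\Lambda$ by the right side.

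First, $\Lambda$ is bounded. By Hölder and the subunit condition~\eqref{eqn:subunit},
\[
|\Lambda(\varphi)|\le \|f\|_{L^p(v)}^{p-1}\|\varphi\|_{L^p(v)}+\|g\|_{L^p(v)}^{p-1}C(\vect)\|\sqrt Q\nabla\varphi\|_{L^p}.
\]

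Next, $\mathcal{A}$ has the three properties needed for the Minty--Browder theorem (or the Lions/Kinderlehrer--Stampacchia result cited in the introduction):
\begin{itemize}
\item it is bounded and hemicontinuous, by Hölder and dominated convergence, using $F\in L^\infty(v)$;
\item it is strictly monotone, by the standard vector inequality $\langle |a|^{p-2}a-|b|^{p-2}b,a-b\rangle>0$ for $a\ne b$ applied to $\sqrt Q\nabla u$, together with the monotonicity of $t\mapsto |t|^{p-2}t$ and $F\ge0$;
\item it is coercive, since $\langle\mathcal{A}u,u\rangle\ge\|\sqrt Q\nabla u\|_{L^p}^p$, which controls the full norm by the Sobolev inequality.
\end{itemize}
Hence a solution exists.

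For the regularity estimate~\eqref{regularity}, test the equation with $u$ itself. Dropping the $F$ term and writing $X=\|\sqrt Q\nabla u\|_{L^p}$, this gives
\[
X^p\le \big(S(p,1)\|f\|^{p-1}+C(\vect)\|g\|^{p-1}\big)X,
\]
so $X^{p-1}\lesssim \|f\|^{p-1}+\|g\|^{p-1}$. Combined with Sobolev, this yields $\|u\|_{L^p(v)}\le c_0(\|f\|+\|g\|)$.

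\emph{Uniqueness.} If $u_1,u_2$ both solve the problem, subtract the two weak formulations and test with $u_1-u_2$. Strict monotonicity forces $\sqrt Q\nabla u_1=\sqrt Q\nabla u_2$ a.e. The Sobolev inequality then gives $\|u_1-u_2\|_{L^p(v)}=0$, so the pairs coincide in $QH^{1,p}_0$.

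\emph{Dirichlet $\Rightarrow$ Sobolev.} Here is the main obstacle: we must recover an inequality for arbitrary $\varphi$ from solvability alone. The plan is to take $F=0$, $g=0$, and $f=|\varphi|^{p-2}\varphi\,$ raised suitably; concretely, choose $f$ with $|f|^{p-2}f=|\varphi|^{p-2}\varphi$, i.e.\ $f=\varphi$. Let $u$ be the solution provided by the Dirichlet property. Testing the weak formulation with $\varphi\in Q\lip_0(v,\Omega)$ gives
\[
\int_\Omega|\varphi|^p v\,dx=\int_\Omega|\sqrt Q\nabla u|^{p-2}\langle \sqrt Q\nabla u,\sqrt Q\nabla\varphi\rangle\,dx\le \|\sqrt Q\nabla u\|_{L^p}^{p-1}\|\sqrt Q\nabla\varphi\|_{L^p}.
\]
To bound $\|\sqrt Q\nabla u\|_{L^p}$, test instead with $u$ and use the regularity estimate~\eqref{regularity}:
\[
\|\sqrt Q\nabla u\|_{L^p}^p=\int_\Omega|\varphi|^{p-2}\varphi\, u\,v\,dx\le\|\varphi\|_{L^p(v)}^{p-1}\|u\|_{L^p(v)}\le c_0\|\varphi\|_{L^p(v)}^p .
\]
Thus $\|\sqrt Q\nabla u\|_{L^p}^{p-1}\le c_0^{(p-1)/p}\|\varphi\|_{L^p(v)}^{p-1}$. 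Substituting this into the first estimate and dividing by $\|\varphi\|_{L^p(v)}^{p-1}$ gives
\[
\|\varphi\|_{L^p(v)}\le c_0^{(p-1)/p}\|\sqrt Q\nabla\varphi\|_{L^p},
\]
which is the $p$-Sobolev property with $S(p,1)=c_0^{(p-1)/p}$. The key point to check is that regularity is assumed for every solution, so no uniqueness is needed in this direction.
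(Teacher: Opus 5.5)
Your proposal is correct and follows the paper's proof essentially step by step. Existence comes from Minty's theorem, with the Sobolev inequality supplying coercivity. Regularity and uniqueness come from testing with $u$ (resp.\ $u_1-u_2$) and using strict monotonicity plus the Sobolev inequality. The converse takes $F=g=0$, $f=\varphi$, tests with $\varphi$, and bounds $\|\sqrt{Q}\nabla u\|_{L^p(\Omega)}$ via the Caccioppoli-type estimate of Lemma~\ref{Lemma5.1}; you specialize that estimate to $g=0$, so no Young's inequality is needed.

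The one variation is in the hemicontinuity step, where you use dominated convergence instead of the paper's pointwise algebraic inequalities. This is a minor and slightly cleaner substitute that does not change the argument.
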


As a corollary to the proof we get that solutions satisfy a Caccioppoli-type inequality. See Lemma~\ref{Lemma5.1} and inequality~\eqref{eqn:grad-regularity2} below.

\begin{cor} 
    Given $1<p<\infty$, if the pair $(v,Q)$ has the $p$-Sobolev and $p$-Dirichlet properties, then every degenerate weak solution $(u,\nabla u)$ of the Dirichlet problem~\eqref{Dirichlet prob.} satisfies 
    \begin{equation} \label{eqn:grad-regularity}
    \|\nabla u\|_{QL^p(\Omega)}\leq c_1\big[ \|f\|_{L^p(v,\Omega)} + \|g\|_{L^p(v,\Omega)} \big]
\end{equation}
where $c_1$ is a positive constant independent of $(u,\nabla u),~f,~$and $g$.
\end{cor}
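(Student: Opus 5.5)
The estimate comes from testing the weak formulation of~\eqref{Dirichlet prob.} with the solution itself, and then using the $p$-Sobolev property to absorb the $L^p(v)$ norm of $u$ into the gradient norm. Let $(u,\nabla u)\in QH^{1,p}_0(v,\Omega)$ be a degenerate weak solution. The weak formulation states that for all $(\varphi,\nabla\varphi)\in QH^{1,p}_0(v,\Omega)$,
\begin{multline*}
\int_\Omega |\sqrt{Q}\nabla u|^{p-2}\langle Q\nabla u,\nabla\varphi\rangle\,dx + \int_\Omega F|u|^{p-2}u\varphi\, v\,dx \\
= \int_\Omega |f|^{p-2}f\varphi\, v\,dx + \int_\Omega |g|^{p-2}g\,(\vect\cdot\nabla\varphi)\, v\,dx .
\end{multline*}
Test functions are a priori in $Q\lip_0(v,\Omega)$, but by density (the approximation in Lemma~\ref{lemma:approximation}) and continuity of every term in the $QH^{1,p}_0$ norm, we may take $\varphi=u$.

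With $\varphi=u$, the first term equals $\|\nabla u\|_{QL^p(\Omega)}^p$, and the zero order term is nonnegative because $F\ge 0$, so we drop it. On the right-hand side we use H\"older's inequality with exponents $p'$ and $p$. The first term is bounded by $\|f\|_{L^p(v)}^{p-1}\|u\|_{L^p(v)}$. The second is bounded by $\|g\|_{L^p(v)}^{p-1}\|\vect\cdot\nabla u\|_{L^p(v)}$, and the subunit condition~\eqref{eqn:subunit} (extended to $QH^{1,p}_0$ by the same density) turns this into $C(\vect)\|g\|_{L^p(v)}^{p-1}\|\nabla u\|_{QL^p}$.

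Next we apply the Sobolev inequality~\eqref{Sobolev prop}, valid on $QH^{1,p}_0(v,\Omega)$ by the remark after the definition, to get $\|u\|_{L^p(v)}\le S(p,1)\|\nabla u\|_{QL^p}$. Combining the bounds gives
\[
\|\nabla u\|_{QL^p}^p \le \big(S(p,1)\|f\|_{L^p(v)}^{p-1}+C(\vect)\|g\|_{L^p(v)}^{p-1}\big)\|\nabla u\|_{QL^p}.
\]
If $\nabla u=0$ there is nothing to prove. Otherwise we divide by $\|\nabla u\|_{QL^p}$, take the $(p-1)$-th root, and use $(a+b)^{1/(p-1)}\le c_p(a^{1/(p-1)}+b^{1/(p-1)})$. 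This yields~\eqref{eqn:grad-regularity} with $c_1=c_p\max\big(S(p,1)^{1/(p-1)},C(\vect)^{1/(p-1)}\big)$, which is independent of $u$, $f$, and $g$.

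The only step that needs care is the density justification for $\varphi=u$, namely that each term of the weak formulation is continuous in the $QH^{1,p}_0$ norm. Each term is handled by H\"older's inequality, using $F\in L^\infty(v,\Omega)$ for the zero order term. Note that this argument does not use the regularity estimate~\eqref{regularity} from the $p$-Dirichlet property; that estimate would alternatively give the same conclusion by bounding $\|u\|_{L^p(v)}$ directly by $c_0\big[\|f\|_{L^p(v)}+\|g\|_{L^p(v)}\big]$.
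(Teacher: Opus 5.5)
Your proof is correct and is essentially the paper's own derivation of \eqref{eqn:grad-regularity2} at the end of Section~\ref{section:sobolev-to-dirichlet}. You test with $u$ (the paper invokes Lemma~\ref{test func.} here rather than Lemma~\ref{lemma:approximation}, but your direct density argument suffices), drop the nonnegative $F$ term, apply H\"older, \eqref{eqn:subunit} and the Sobolev inequality, and then absorb. The only cosmetic difference is that you divide by $\|\nabla u\|_{QL^p(\Omega)}$ where the paper uses Young's inequality, and the alternative route through \eqref{regularity} that you mention is exactly the paper's Lemma~\ref{Lemma5.1}.
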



\begin{remark}
    The implication that the $p$-Sobolev property implies the $p$-Dirichlet property in Theorem~\ref{equivalence} is a generalization of~\cite[Theorem~3.6]{SC-DCU-FED-SR-Zeren}, which proves existence and uniqueness when $p=2$.  The necessity of the Sobolev inequality , however, is new even in this case.  We note that Sawyer and Wheeden studied the necessity of the Sobolev inequality with gain for what they called $L^q$ subellipticity for the homogeneous Dirichlet problem.  This required the weak solution to satisfy an interior $C^\alpha$ estimate, a stronger condition than \eqref{regularity}. Their work is related to ours when $v=1$ and $p=2$.  We refer the reader to \cite[Chapter 1.1.1]{sawyer2006holder} and \cite[Lemma 75]{sawyer2006holder} for more details.
\end{remark}

\begin{remark}
    Building on the work of Fabes, Kenig and Serapioni~\cite{MR643158} and Chanillo and Wheeden~\cite{MR805809}, it is possible to give examples of pairs $(v,Q)$ such that the $p$-Sobolev property holds.  (This is done for the Poincar\'e inequality in~\cite{Cruz-Uribe-Rodney-Rosta}, but essentially the same arguments work for the Sobolev inequality.)  Furthermore, in a recent paper~\cite{CDR-preprint}, the authors have shown that if there exists a function $w$ such that  $(v,Q)$ satisfies the degenerate ellipticity condition~\eqref{eqn:degen-elliptic}, then inequality~\eqref{Sobolev prop} holds provided that for some $q$, $1\leq q\leq p$, and some $\sigma$, $ 1\leq \sigma \leq \frac{n}{n-q}$ if $q<n$, and $\sigma\geq 1$ if $q\geq n$, $v\in L^{\frac{\sigma q}{\sigma q-p}}(\Omega)$ and $w^{-1} \in L^{\frac{q}{q-p}}(\Omega)$.  (Here we interpret $L^{\frac{a}{0}}(\Omega)$ as $L^\infty(\Omega)$.)  We stress that the only assumptions on $v$ and $w$ are integrability conditions, and we do not have to assume either Muckenhoupt $A_p$ or doubling conditions.  In all of these examples, Theorem~\ref{equivalence} shows that the Dirichlet problem~\eqref{Dirichlet prob.} has a unique solution.
\end{remark}

\begin{remark}
In~\cite{Cruz-Uribe-MacDonald-Rodney}, with stronger hypotheses on $v$, $Q$, $f$, and $g$, it was shown that solutions to~\eqref{Dirichlet prob.} are exponentially integrable.  Moreover, if a global Sobolev inequality with gain (that is, the $L^p(v,\Omega)$ norm on the right-hand side of inequality~\eqref{Sobolev prop} is replaced by an $L^{\sigma p}(v,\Omega)$   norm, $\sigma>1$, or an Orlicz norm $L^A(v,\Omega)$, $A(t) = t^p\log(e+t)^\sigma$, $\sigma>1$, then the solutions are bounded.  Thus, Theorem~\ref{equivalence} shows that these results are not vacuous.
\end{remark}

\begin{remark}
 One feature of our assumptions that we want to highlight is that the weight $v$ is decoupled from the matrix $Q$.  In earlier work (cf.~\cite{MR643158,MR847996}), $v$ was assumed to be the largest eigenvalue of $Q$ and had some additional regularity. In~\cite{SC-DCU-FED-SR-Zeren}, in order to prove existence and uniqueness of the linear equation with first order terms, it was necessary to assume  $|Q(x)|_\op \leq Cv(x)$, and also to assume that $v\in L^1(\Omega)$.  However,  for the equation with only the second order term, they showed that these assumptions were not necessary. (See~\cite[Theorem~3.6]{SC-DCU-FED-SR-Zeren}.)
\end{remark}

\begin{remark}
    While in~\cite{Cruz-Uribe-Rodney-Rosta} the existence of solutions to the Neumann-type problem was proved, the authors did not prove uniqueness.  The argument given below in Section~\ref{section:unique} can be adapted to prove uniqueness in that paper.  Details are left to the interested reader.
\end{remark}

\begin{remark}
    In~\cite{MR4332462}, the first and third authors and Penrod extended the results in~\cite{Cruz-Uribe-Rodney-Rosta}  to a Neumann-type problem for a variable exponent, degenerate $p(\cdot)$-Laplacian equation of the form
    \[ \Div(|\sqrt{Q}\nabla u|^{p(\cdot)-2} Q\nabla u) = |f|^{p(\cdot)-2} f v^{p(\cdot)},  \]
    assuming a degenerate, variable exponent Poincar\'e inequality.  It is an open problem to extend Theorem~\ref{equivalence} to this setting. 
\end{remark}

\medskip

The remainder of this paper is organized as follows.  In Section~\ref{section:prelim} we define the weighted Lebesgue and Sobolev spaces that are the solution spaces for~\eqref{Dirichlet prob.}, and we give a precise definition of degenerate weak solutions to this Dirichlet problem.  In Section~\ref{section:dirichlet-to-Sobolev} we prove that the $p$-Dirichlet property implies the $p$-Sobolev property.  Our proof depends on a Caccioppoli-type inequality:  see Lemma~\ref{Lemma5.1}.  In Section~\ref{section:sobolev-to-dirichlet} we prove  the converse.  Our proof of existence uses Minty's theorem~\cite[Chapter~II, Proposition~2.2]{Showalter}.  Finally, in Section~\ref{section:unique} we prove the uniqueness of solutions by adapting an argument due to Yaremenko~\cite{Yaremenko}.

\section{Preliminaries}
\label{section:prelim}

In this section we define the degenerate Sobolev spaces we use and then use them to define weak solutions to the Dirichlet problem~\eqref{Dirichlet prob.}.  We follow the notation and development from~\cite{SC-DCU-FED-SR-Zeren} and we refer the reader there for further information.

\subsection*{Sobolev Spaces}
Given a measurable function $v$ and $1<p<\infty$, define the space $L^p(v,\Omega)$ to be the collection of all measurable functions $f$ such that
\[ \|f\|_{L^p(v,\Omega)} = \bigg(\int_\Omega |f|^p\,vdx \bigg)^{\frac{1}{p}} < \infty.  \]
It is well-known that $L^p(v,\Omega)$ is a separable, reflexive Banach space.  (See~\cite[Chapter~3]{MR924157}.)

Again for $1<p<\infty$, given a matrix function $Q$, let $QL^p(\Omega)$  be the space of measurable functions $\vecf : \Omega \rightarrow \rn$ such that
\[ \|\vecf\|_{QL^p(\Omega)} = \bigg(\int_\Omega |\sqrt{Q}\vecf|^p\,dx\bigg)^{\frac{1}{p}}
< \infty. \]
Since we have that $|Q|_\op \in L^{\frac{p}{2}}_\loc(\Omega)$, $QL^p(\Omega)$ is also a separable, reflexive Banach space.  See~\cite[Lemma~2.1]{SC-DCU-FED-SR-Zeren}.  It follows immediately that the direct sum $L^p(v,\Omega) \oplus QL^p(\Omega)$ is also a separable, reflexive Banach space.

\medskip

We can use these spaces to define our degenerate Sobolev space.  Let $\lip_\loc(\Omega)$ be the collection of functions that are Lipschitz on compact subsets $\Omega$, and let $\lip_0(\Omega)$ be those functions in $\lip_\loc(\Omega)$ which have compact support in $\Omega$.  Given
$1\leq p<\infty$, let $Q\lip(v,\Omega)$ be the collection of $\varphi \in \lip_\loc(\Omega)$ such that
\[ \|\varphi\|_{QH^{1,p}(v,\Omega)} 
= \|\varphi\|_{L^p(v,\Omega)} + \|\nabla \varphi \|_{QL^p(\Omega)}< \infty. 
\]
Define the set $Q\lip_0(v,\Omega)= \lip_0(\Omega)\cap Q\lip(v,\Omega) $.  Note that if $v\in L^1_\loc(\Omega)$, then $\lip_0(\Omega) = Q\lip_0(v,\Omega)$.

Define the degenerate Sobolev space $QH^{1,p}(v,\Omega)$ to be the abstract closure of $Q\lip(v,\Omega)$ with respect to this norm.  Formally, this space is the collection of equivalence classes of Cauchy sequences with respect to this norm in $Q\lip(v,\Omega)$.  However, we can identify it with a closed subspace  of the Banach space $L^p(v,\Omega) \oplus QL^p(\Omega)$.  Given an equivalence class in $QH^{1,p}(v,\Omega)$ represented by the Cauchy sequence $\{\varphi_k\}_{k=1}^\infty$, the sequence $\{(\varphi_k,\nabla \varphi_k)\}_{k=1}^\infty$ is Cauchy in $L^p(v,\Omega)\oplus QL^p(\Omega)$; since the direct sum of Banach spaces is again a Banach space, this sequence converges in $L^p(v,\Omega)\oplus QL^p(\Omega)$ to a  limit $(u,\vecg)$, and the collection of all such limits forms a closed subspace, and so is itself a Banach space.  Since this limit is unique to each equivalence class, we can identify each element of $QH^{1,p}(v,\Omega)$ with this ordered pair.  In an abuse of notation, we will denote $\vecg$ by $\nabla u$, and will refer to it as the degenerate weak gradient of $u$.  However, depending on the matrix $Q$, $\nabla u$ may not be a weak derivative in the sense of distributions.  Moreover, it may not even be uniquely determined by $u$.  We refer the reader to~\cite{SC-DCU-FED-SR-Zeren} for further information.

\begin{remark} \label{remark:grad-linear}
Even though the degenerate weak gradient is not a distributional derivative, it does have many of the properties of the classical definition of a weak derivative.  In particular, below we will need that the degenerate weak gradient is linear: if $u,\,v \in QH^{1,p}(v,\Omega)$, by looking at the Cauchy sequences of Lipschitz functions used to define them, we have that $\nabla(u-v) = \nabla u -\nabla v$.  For additional properties, see~\cite{Cruz-Uribe-Rodney}.
\end{remark}

We define the space $QH_0^{1,p}(v,\Omega)$ to be  the closure of $Q\lip_0(v,\Omega)$ in $QH^{1,p}(v,\Omega)$.   Since it is a closed subspace of the reflexive, separable Banach space $L^p(v,\Omega) \oplus QL^p(\Omega)$, $QH_0^{1,p}(v,\Omega)$ is itself a reflexive, separable Banach space.

\begin{lemma} \label{lemma:approximation}
If $(v,Q)$ have the $p$-Sobolev inequality, then inequality~\eqref{Sobolev prop} holds for any pair $(u,\nabla u) \in QH_0^{1,p}(v,\Omega)$.
\end{lemma}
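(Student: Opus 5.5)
By the definition of $QH_0^{1,p}(v,\Omega)$ as the closure of $Q\lip_0(v,\Omega)$, we can fix a pair $(u,\nabla u)\in QH_0^{1,p}(v,\Omega)$ and choose a sequence $\{\varphi_k\}_{k=1}^\infty\subset Q\lip_0(v,\Omega)$ with $(\varphi_k,\nabla\varphi_k)\to(u,\nabla u)$ in the norm of $L^p(v,\Omega)\oplus QL^p(\Omega)$. In other words,
\[ \|\varphi_k-u\|_{L^p(v,\Omega)}\to 0 \quad\text{and}\quad \|\nabla\varphi_k-\nabla u\|_{QL^p(\Omega)}\to 0. \]
Each $\varphi_k$ is admissible in the $p$-Sobolev property, so inequality~\eqref{Sobolev prop} gives
\[ \|\varphi_k\|_{L^p(v,\Omega)}\leq S(p,1)\|\nabla\varphi_k\|_{QL^p(\Omega)} \quad\text{for every } k. \]

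Next we pass to the limit in both sides. The maps $w\mapsto\|w\|_{L^p(v,\Omega)}$ and $\vecf\mapsto\|\vecf\|_{QL^p(\Omega)}$ are seminorms (indeed norms on the quotient spaces). By the reverse triangle inequality,
\[ \big|\|\varphi_k\|_{L^p(v,\Omega)}-\|u\|_{L^p(v,\Omega)}\big|\leq\|\varphi_k-u\|_{L^p(v,\Omega)}\to0, \]
and in the same way $\|\nabla\varphi_k\|_{QL^p(\Omega)}\to\|\nabla u\|_{QL^p(\Omega)}$. Letting $k\to\infty$ in the inequality for $\varphi_k$ then yields~\eqref{Sobolev prop} for $(u,\nabla u)$, with the same constant $S(p,1)$.

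The only point that needs care is that $\nabla u$ is the abstract limit of $\nabla\varphi_k$ in $QL^p(\Omega)$, not a classical or distributional gradient. However, the right-hand side of the inequality for $(u,\nabla u)$ is by definition $\|\nabla u\|_{QL^p(\Omega)}$, where $\nabla u$ is exactly this limit. So no identification of $\nabla u$ with an actual derivative is needed, and the argument contains no genuine obstacle beyond unwinding the definitions.
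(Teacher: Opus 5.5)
Your proof is correct and uses the same density argument as the paper: approximate $(u,\nabla u)$ by $\varphi_k\in Q\lip_0(v,\Omega)$, apply~\eqref{Sobolev prop} to each $\varphi_k$, and let $k\to\infty$. The only difference is in the limit step: the paper passes to a $v$-a.e.\ convergent subsequence and applies Fatou's lemma to the left-hand side, while you use continuity of the norms, which is slightly more direct and needs no subsequence.
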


\begin{proof}
    If $(u,\nabla u) \in QH_0^{1,p}(v,\Omega)$, then there exists a sequence $\{u_k\}_{k=1}^\infty$, such that $u_k \to u$ in $L^p(v,\Omega)$ and $\nabla u_k \to \nabla u$ in $QL^p(\Omega)$.  By passing to a subsequence, we may also assume that $u_k\to u$ pointwise $v$-almost everywhere.  The desired inequality follows at once by Fatou's lemma.
\end{proof}

\subsection*{Weak Solutions}
We can now define degenerate weak solutions to the Dirichlet problem~\eqref{Dirichlet prob.}. Given $f,\,g \in L^p(v,\Omega)$, we say that a pair $(u,\nabla u)\in QH^{1,p}_0(v,\Omega)$ is a degenerate weak solution 
if for all test functions $\varphi\in Q\lip_0(v,\Omega)$,
\begin{multline}\label{weak soln.}
    \int_\Omega\big|\sqrt{Q}\nabla u \big|^{p-2}Q\nabla u \cdot \nabla\varphi \, dx
    +\int_\Omega F|u|^{p-2}\, u\,\varphi \, vdx\\
    =\int_\Omega|f|^{p-2}f\varphi \, vdx
    + \int_\Omega |g|^{p-2}g \, \vect \cdot \nabla \varphi\, vdx.
\end{multline}

With our assumptions we have, {\em a priori},  that both sides of \eqref{weak soln.} are finite. Since $u\in L^p(v,\Omega)$, $\nabla u\in QL^p(\Omega)$ and $\varphi\in Q\lip_0(v,\Omega)$, by Hölder's inequality we get
\begin{align*}\label{ineq}
   &  \bigg|\int_\Omega\big|\sqrt{Q}\nabla u\big|^{p-2}Q\nabla u \cdot \nabla\varphi  \, dx\bigg|
    +\bigg|\int_\Omega F|u|^{p-2}\varphi\, u\, vdx\bigg|\\
& \qquad \qquad \leq 
\int_\Omega\big|\sqrt{Q}\nabla u \big|^{p-1}\big|\sqrt{Q}\nabla \varphi\big|\, dx
+\int_\Omega F|u|^{p-1}|\varphi|\, vdx\\
& \qquad \qquad \leq 
\|\nabla u \|^{p-1}_{QL^p(\Omega)}\|\nabla \varphi\|_{QL^p(\Omega)}+\|F\|_{L^\infty(v,\Omega)}\|u\|^{p-1}_{L^p(v,\Omega)}\|\varphi\|_{L^p(v,\Omega)} \\
& \qquad \qquad <\infty.
\end{align*}
Similarly, since $f,\, g \in L^p(v,\Omega)$, and $\vect$ is a degenerate subunit vector-field,
\begin{multline*}   \bigg|\int_\Omega|f|^{p-2}f\varphi\, vdx\bigg|
+ \bigg| \int_\Omega |g|^{p-2}g \vect \cdot \nabla \varphi\, vdx \bigg|\\
\leq \|f\|^{p-1}_{L^p(v,\Omega)}\| \varphi\|_{L^p(v,\Omega)}
+ \|g\|^{p-1}_{L^p(v,\Omega)}\|\vect \cdot \nabla \varphi\|_{L^p(v,\Omega)}
<\infty.  
\end{multline*}

The next result follows from the definition of a degenerate weak solution and an approximation argument using H\"older's inequality as in the above estimates.  For the case $p=2$, see~\cite[Lemma~4.5]{SC-DCU-FED-SR-Zeren}.

\begin{lemma}\label{test func.}
    If $(u,\nabla u)\in QH^{1,p}_0(v,\Omega)$ is a degenerate weak solution of the Dirichlet problem~\eqref{Dirichlet prob.}, then we can use $QH^{1,p}_0(v,\Omega)$ functions as test functions:  that is,~\eqref{weak soln.} also holds with $(\varphi,\nabla \varphi)$ replaced by any pair $(w,\nabla w)\in QH^{1,p}_0(v,\Omega)$. 
\end{lemma}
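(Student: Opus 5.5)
The plan is a density argument. Fix $(w,\nabla w)\in QH^{1,p}_0(v,\Omega)$. By the definition of $QH^{1,p}_0(v,\Omega)$ as the closure of $Q\lip_0(v,\Omega)$, there is a sequence $\varphi_k\in Q\lip_0(v,\Omega)$ with $\varphi_k\to w$ in $L^p(v,\Omega)$ and $\nabla\varphi_k\to\nabla w$ in $QL^p(\Omega)$. Identity~\eqref{weak soln.} holds for each $\varphi_k$. We will show that each of the four integrals in~\eqref{weak soln.} is a bounded linear functional of the pair $(\varphi,\nabla\varphi)\in L^p(v,\Omega)\oplus QL^p(\Omega)$, with $u$, $f$, $g$ fixed. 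Then we can pass to the limit $k\to\infty$ on both sides and obtain the identity for $(w,\nabla w)$.

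The four estimates are the ones already displayed after the definition of weak solution, now applied to differences $\varphi_k-w$.

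\emph{First term.} By linearity (Remark~\ref{remark:grad-linear}) the gradient of the difference is $\nabla\varphi_k-\nabla w$. Since $Q$ is symmetric, $Q\nabla u\cdot\nabla\psi=\sqrt{Q}\nabla u\cdot\sqrt{Q}\nabla\psi$. Hölder's inequality with exponents $p'$ and $p$ then gives
\[ \Big|\int_\Omega|\sqrt{Q}\nabla u|^{p-2}Q\nabla u\cdot(\nabla\varphi_k-\nabla w)\,dx\Big|\le \|\nabla u\|_{QL^p(\Omega)}^{p-1}\|\nabla\varphi_k-\nabla w\|_{QL^p(\Omega)}\to0. \]

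\emph{Zero order term.} Similarly, this term is bounded by $\|F\|_{L^\infty(v,\Omega)}\|u\|^{p-1}_{L^p(v,\Omega)}\|\varphi_k-w\|_{L^p(v,\Omega)}$.

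\emph{$f$ term.} This is bounded by $\|f\|^{p-1}_{L^p(v,\Omega)}\|\varphi_k-w\|_{L^p(v,\Omega)}$.

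\emph{$g$ term.} This term needs $\vect\cdot\nabla\psi$ to make sense for a degenerate gradient, and this is the one step needing care. The subunit inequality~\eqref{eqn:subunit} is stated for Lipschitz $\varphi$. However, $\{\vect\cdot\nabla\varphi_k\}$ is Cauchy in $L^p(v,\Omega)$ because of~\eqref{eqn:subunit}. So I would first extend the map $\nabla\varphi\mapsto\vect\cdot\nabla\varphi$ by continuity to $QH^{1,p}_0(v,\Omega)$, equivalently interpret $\vect\cdot\nabla w$ as this limit, noting that~\eqref{eqn:subunit} persists in the limit. With that interpretation, the $g$ term is bounded by $\|g\|^{p-1}_{L^p(v,\Omega)}C(\vect)\|\nabla\varphi_k-\nabla w\|_{QL^p(\Omega)}\to0$.

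Combining the four estimates, each integral in~\eqref{weak soln.} with test function $\varphi_k$ converges to the corresponding integral with $(w,\nabla w)$. Since the identity holds for every $k$, it holds in the limit, which proves the lemma. The only potential obstacle is well-definedness: the degenerate gradient need not be determined by $w$ alone. Every quantity above, however, depends only on the pair $(w,\nabla w)$ and on the limits taken in $L^p(v,\Omega)\oplus QL^p(\Omega)$, so the argument is independent of the approximating sequence.
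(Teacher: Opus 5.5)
Your proof is correct and follows the paper's approach. The paper gives no details beyond saying the lemma follows by approximating with $Q\lip_0(v,\Omega)$ functions and using the H\"older estimates displayed after~\eqref{weak soln.}, which is exactly your term-by-term limit. Your extra step of defining $\vect\cdot\nabla w$ by continuous extension through~\eqref{eqn:subunit} clarifies a point the paper leaves implicit: the paper later applies~\eqref{eqn:subunit} directly to degenerate gradients without comment.
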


\section{The Proof that $p$-Dirichlet Implies $p$-Sobolev}
\label{section:dirichlet-to-Sobolev}

In this section we prove the first half of Theorem~\ref{equivalence} by showing that the existence of solutions to the Dirichlet problem~\eqref{Dirichlet prob.} implies that the Sobolev inequality~\eqref{Sobolev prop} holds.  To prove this we need to prove the following Caccioppoli-type  inequality.

\begin{lemma}\label{Lemma5.1}
    Given $1<p<\infty$ and $f,\,g \in L^p(v,\Omega)$, suppose  $(u,\nabla u)\in QH^{1,p}_0(v,\Omega)$ is a degenerate weak solution of the Dirichlet problem~\eqref{Dirichlet prob.} that satisfies~\eqref{regularity}.  Then there is a constant $c_1>0$ so that $\nabla u \in QL^p(\Omega)$ satisfies 
    \begin{equation} \label{eqn:caccioppoli}
    \|\nabla u\|_{QL^p(\Omega)}\leq c_1\big[ \|f\|_{L^p(v,\Omega)} + \|g\|_{L^p(v,\Omega)}\big].
    \end{equation}
\end{lemma}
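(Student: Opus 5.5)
The plan is to test the weak formulation~\eqref{weak soln.} with the solution itself and then absorb terms. By Lemma~\ref{test func.}, we may take $(w,\nabla w)=(u,\nabla u)\in QH^{1,p}_0(v,\Omega)$ as a test function. This gives
\[
\int_\Omega |\sqrt{Q}\nabla u|^p\,dx+\int_\Omega F|u|^p\,v\,dx
=\int_\Omega |f|^{p-2}f\,u\,v\,dx+\int_\Omega |g|^{p-2}g\,\vect\cdot\nabla u\,v\,dx ,
\]
where we used $Q\nabla u\cdot\nabla u=|\sqrt{Q}\nabla u|^2$. Since $F\ge 0$, the second term on the left is nonnegative and can be dropped. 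We obtain
\[
\|\nabla u\|_{QL^p(\Omega)}^p \le \int_\Omega |f|^{p-1}|u|\,v\,dx+\int_\Omega |g|^{p-1}|\vect\cdot\nabla u|\,v\,dx .
\]

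Next we estimate each term on the right by H\"older's inequality with exponents $p'$ and $p$ in $L^p(v,\Omega)$. The first term is bounded by $\|f\|_{L^p(v,\Omega)}^{p-1}\|u\|_{L^p(v,\Omega)}$. The regularity hypothesis~\eqref{regularity} then bounds this by $c_0\|f\|^{p-1}_{L^p(v,\Omega)}(\|f\|_{L^p(v,\Omega)}+\|g\|_{L^p(v,\Omega)})$. The second term is bounded by $\|g\|_{L^p(v,\Omega)}^{p-1}\|\vect\cdot\nabla u\|_{L^p(v,\Omega)}$.

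The only point needing care is that the subunit condition~\eqref{eqn:subunit} is stated for smooth $\varphi$. It must be extended to $(u,\nabla u)\in QH^{1,p}_0(v,\Omega)$, where $\vect\cdot\nabla u$ is understood through the degenerate gradient. Take $\varphi_k\in Q\lip_0(v,\Omega)$ with $\nabla\varphi_k\to\nabla u$ in $QL^p(\Omega)$. By~\eqref{eqn:subunit} applied to $\varphi_k-\varphi_j$, the sequence $\vect\cdot\nabla\varphi_k$ is Cauchy in $L^p(v,\Omega)$. Passing to a subsequence that converges a.e., Fatou's lemma gives $\|\vect\cdot\nabla u\|_{L^p(v,\Omega)}\le C(\vect)\|\nabla u\|_{QL^p(\Omega)}$. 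This is the same approximation already implicit in Lemma~\ref{test func.}, and I expect it to be the main technical step, although it is routine.

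Set $X=\|\nabla u\|_{QL^p(\Omega)}$ and $A=\|f\|_{L^p(v,\Omega)}+\|g\|_{L^p(v,\Omega)}$. The estimates combine to
\[
X^p\le c_0A^p + C(\vect)A^{p-1}X .
\]
Young's inequality with $\epsilon$ gives $C(\vect)A^{p-1}X\le \tfrac12 X^p + C A^p$. Absorbing $\tfrac12X^p$ into the left side, which is legitimate because $X<\infty$, yields $X^p\le C'A^p$. This gives~\eqref{eqn:caccioppoli} with $c_1$ depending only on $p$, $c_0$ and $C(\vect)$.
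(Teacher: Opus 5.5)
Your proof is correct and follows the paper's argument essentially step for step: you test with $(u,\nabla u)$ via Lemma~\ref{test func.}, drop the nonnegative $F$-term, apply H\"older's inequality together with \eqref{regularity} and \eqref{eqn:subunit}, and absorb $\tfrac12\|\nabla u\|_{QL^p(\Omega)}^p$ using Young's inequality. Your extra step extending \eqref{eqn:subunit} to $QH^{1,p}_0(v,\Omega)$ is something the paper uses without comment; it amounts to interpreting $\vect\cdot\nabla u$ as the $L^p(v,\Omega)$-limit of $\vect\cdot\nabla\varphi_k$ (the same interpretation Lemma~\ref{test func.} needs), and norm convergence then gives the bound directly, so Fatou's lemma is not needed.
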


\begin{proof}
    Let $(u,\nabla u)\in QH^{1,p}_0(v,\Omega)$ be a degenerate weak solution of \eqref{Dirichlet prob.} that satisfies~\eqref{regularity}. By Remark~\ref{test func.} we can take the pair $(u,\nabla u)$ as our test function in the definition~\eqref{weak soln.} to get
    \begin{align} \label{eqn:caccioppoli2}
        \|\nabla u\|^p_{QL^p(\Omega)}
        &=\int_\Omega\big|\sqrt{Q}\nabla u\big|^{p-2}Q\nabla u \cdot \nabla u \,dx \notag \\
        &= \int_\Omega|f|^{p-2}f uv\,dx
        + \int_\Omega |g|^{p-2}g \vect \cdot \nabla u\, vdx
        -\int_\Omega F|u|^{p-2}u^2 \,vdx \notag \\
        &\leq \int_\Omega|f|^{p-1}|u| \,vdx
        + \int_\Omega |g|^{p-1} |\vect \cdot \nabla u|\, vdx. \notag\\
\intertext{In the last inequality we used that $F$ is non-negative.  By Hölder's inequality, inequality~\eqref{eqn:subunit}, inequality \eqref{regularity}, and Young's inequality,}
    &\leq \|f\|^{p-1}_{L^p(v,\Omega)}\|u\|_{L^p(v,\Omega)}
    + \|g\|^{p-1}_{L^p(v,\Omega)}\|\vect \cdot \nabla u\|_{L^p(v,\Omega)}  \notag\\
    &\leq c_0\|f\|^{p-1}_{L^p(v,\Omega)}\big[\|f\|_{L^p(v,\Omega)} + \|g\|_{L^p(v,\Omega)}\big]
    + C(\vect) \|g\|^{p-1}_{L^p(v,\Omega)}\|\nabla u\|_{QL^p(\Omega)} \\
   &\leq c_1 \big[\|f\|^{p}_{L^p(v,\Omega)}
   + \|g\|^{p}_{L^p(v,\Omega)}\big] + \tfrac{1}{2} \|\nabla u\|_{QL^p(\Omega)}^p, \notag
\end{align}
where $c_1 = c_1(c_0,p,C(\vect))$.  If we now rearrange terms, we get inequality~\eqref{eqn:caccioppoli}.
\end{proof}

The proof that the $p$-Dirichlet property implies the $p$-Sobolev property follows directly from Lemma \ref{Lemma5.1}. If $f=0$ there is nothing to prove, so fix $f\in Q\lip_0(v,\Omega)$, $\|f\|_{L^{p}(v,\Omega)}> 0$. 
Let $F=g= 0$ and let $(u,\nabla u)\in QH^{1,p}_0(v,\Omega)$ be a weak solution of \eqref{Dirichlet prob.} corresponding to this function $f$. Since $f$ itself is a valid test function, by \eqref{weak soln.}, H\"older's inequality, and Lemma~\ref{Lemma5.1},  we get
\begin{multline*}
    \|f\|^p_{L^p(v,\Omega)}
    =\bigg|\int_\Omega|f|^{p-2}f \,f \,v dx\bigg|
    =\bigg|\int_\Omega\big|\sqrt{Q}\nabla u\big|^{p-2} Q\nabla u \cdot \nabla f\,dx\bigg|\\
    \leq \int_\Omega \big|\sqrt{Q}\nabla u\big|^{p-1}\big|\sqrt{Q}\nabla f\big|\,dx 
    \leq \|\nabla u\|^{p-1}_{QL^p(\Omega)}\|\nabla f\|_{QL^p(\Omega)}
  \leq c_1^{p-1} \|f\|^{p-1}_{L^p(v,\Omega)}\|\nabla f\|_{QL^p(\Omega)}.
\end{multline*}
Since $\|f\|^{p-1}_{L^p(v,\Omega)}> 0$ we can divide by this quantity to get the Sobolev inequality~\eqref{Sobolev prop}.  This completes the proof that the $p$-Dirichlet property implies the $p$-Sobolev property.

\section{The proof that $p$-Sobolev Implies $p$-Dirichlet}
\label{section:sobolev-to-dirichlet}

In this section we prove the second half of Theorem~\ref{equivalence} by showing that if the Sobolev inequality~\eqref{Sobolev prop} holds, then there exist degenerate weak solutions to the Dirichlet problem~\eqref{Dirichlet prob.}, and we prove that the solution satisfies~\eqref{regularity}.
 We will prove existence  by using Minty's theorem \cite[Chapter II, Proposition 2.2]{Showalter}, which can be thought of as a Banach space version of the  Lax-Milgram theorem. To state this result we first introduce some notation. Given a separable, reflexive Banach space $\mathcal{B}$, denote its dual space by $\mathcal{B}^*$. Given a functional $\Gamma \in \mathcal{B}^*$, write its value at $\varphi\in \mathcal{B}$ as $\langle \Gamma,\varphi\rangle$. Thus, if $\mathcal{T}:\mathcal{B}\rightarrow\mathcal{B}^*$ and $\vecu\in \mathcal{B}$, then we have $\mathcal{T}(\vecu)\in \mathcal{B}^*$ and so its value at $\varphi$ is denoted by $\langle\mathcal{T}(\vecu),\varphi\rangle$. We say that the operator $\mathcal{T}$ is bounded if given any bounded set $U\subset \mathcal{B}$, $\mathcal{T}(U)$ is bounded in $\mathcal{B}^*$.

\begin{theorem}\label{Minty's}
Let $\mathcal{B}$ be a separable, reflexive Banach space and fix $\Gamma\in \mathcal{B}^*$. Suppose that $\mathcal{T}:\mathcal{B}\rightarrow\mathcal{B}^*$ is a bounded operator that has the following properties:
\begin{enumerate}
    \item Monotone: $\langle\mathcal{T}(\vecu)-\mathcal{T}(\vecw),\vecu-\vecw\rangle\ge 0$ for all $\vecu,\vecw\in \mathcal{B}$;
    \item Hemicontinuous: for $z\in \R,$ the mapping $z\mapsto\langle\mathcal{T}(\vecu+z\vecw),\vecw\rangle$ is continuous for all $\vecu,\vecw\in \mathcal{B}$;
    \item Almost coercive: there exists a constant $\lambda>0$ so that $\langle\mathcal{T}(\vecu),\vecu\rangle\ge \langle\Gamma,\vecu\rangle$ for any $\vecu\in\mathcal{B}$ satisfying $\|\vecu\|_\mathcal{B}>\lambda$.
\end{enumerate}
  Then the set of $\vecu\in\mathcal{B}$ such that $\mathcal{T}(\vecu)=\Gamma$ is non-empty.  
\end{theorem}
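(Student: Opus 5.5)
The plan is the classical Galerkin approximation argument combined with Minty's monotonicity trick. Since $\mathcal{B}$ is separable, we fix a sequence $\{w_j\}_{j\ge1}$ whose linear span is dense in $\mathcal{B}$, and set $\mathcal{B}_m=\operatorname{span}\{w_1,\dots,w_m\}$. These are finite-dimensional and increasing. The goal is to solve the projected problem on each $\mathcal{B}_m$, obtain uniform bounds, pass to a weak limit, and then identify the limit using monotonicity and hemicontinuity.

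\emph{Step 1 (continuity on finite-dimensional subspaces).} Identify $\mathcal{B}_m$ with $\R^m$ and define $P_m:\R^m\to\R^m$ by
\[ P_m(a)_j=\langle \mathcal{T}(u)-\Gamma,w_j\rangle, \qquad u=\textstyle\sum_i a_iw_i. \]
I would first show that $P_m$ is continuous; I expect this to be the main technical point. Take $u_k\to u$ in $\mathcal{B}_m$.
\begin{itemize}
\item By boundedness of $\mathcal{T}$, the restrictions $\mathcal{T}(u_k)|_{\mathcal{B}_m}$ lie in a bounded subset of the finite-dimensional space $\mathcal{B}_m^*$. So some subsequence converges to some $y\in\mathcal{B}_m^*$.
\item Monotonicity gives $\langle \mathcal{T}(u_k)-\mathcal{T}(w),u_k-w\rangle\ge0$ for every $w\in\mathcal{B}_m$. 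Passing to the limit yields $\langle y-\mathcal{T}(w),u-w\rangle\ge0$.
\item Set $w=u-tz$ with $t>0$ and $z\in\mathcal{B}_m$, divide by $t$, and let $t\to0^+$ using hemicontinuity. This gives $\langle y-\mathcal{T}(u),z\rangle\ge0$ for all $z\in\mathcal{B}_m$, so $y=\mathcal{T}(u)$ on $\mathcal{B}_m$.
\end{itemize}
Since every subsequence has a further subsequence with this same limit, $P_m$ is continuous.

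\emph{Step 2 (finite-dimensional solutions).} Fix a radius $R>\lambda$. For $u\in\mathcal{B}_m$ with $\|u\|_{\mathcal{B}}=R$, almost coercivity gives
\[ P_m(a)\cdot a=\langle\mathcal{T}(u)-\Gamma,u\rangle\ge0. \]
The set $\{a:\|\sum a_iw_i\|_{\mathcal{B}}\le R\}$ is a compact convex neighbourhood of $0$ in $\R^m$. We then apply the acute-angle lemma, which follows from Brouwer's theorem. Suppose $P_m$ had no zero in this set. Then the map $a\mapsto -R\,P_m(a)/|P_m(a)|$, composed with radial rescaling to the $\mathcal{B}$-norm sphere, would have a fixed point $a$ on the boundary. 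At that point $P_m(a)\cdot a<0$, which is a contradiction. Hence there is $u_m\in\mathcal{B}_m$ with $\|u_m\|_{\mathcal{B}}\le R$ and
\[ \langle\mathcal{T}(u_m),w\rangle=\langle\Gamma,w\rangle \quad\text{for all } w\in\mathcal{B}_m. \]

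\emph{Step 3 (limit and Minty's trick).} The sequence $\{u_m\}$ is bounded by $R$, and $\{\mathcal{T}(u_m)\}$ is bounded in $\mathcal{B}^*$ because $\mathcal{T}$ is bounded. By reflexivity we may pass to a subsequence with $u_m\rightharpoonup u$ weakly in $\mathcal{B}$ and $\mathcal{T}(u_m)\rightharpoonup \chi$ weakly in $\mathcal{B}^*$.
\begin{itemize}
\item For each fixed $w_j$ we have $\langle\mathcal{T}(u_m),w_j\rangle=\langle\Gamma,w_j\rangle$ once $m\ge j$. Taking limits and using density gives $\chi=\Gamma$.
\item Testing with $u_m$ itself gives $\langle\mathcal{T}(u_m),u_m\rangle=\langle\Gamma,u_m\rangle\to\langle\Gamma,u\rangle$.
\item For any $w\in\mathcal{B}$, expand $0\le\langle\mathcal{T}(u_m)-\mathcal{T}(w),u_m-w\rangle$. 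Each term converges: the quadratic term by the previous item, and the cross terms because they pair a weakly convergent sequence with a fixed vector. Hence $\langle\Gamma-\mathcal{T}(w),u-w\rangle\ge0$.
\item Finally set $w=u-tz$ with $t>0$, divide by $t$, and let $t\to0^+$ by hemicontinuity. This yields $\langle\Gamma-\mathcal{T}(u),z\rangle\ge0$ for all $z\in\mathcal{B}$; replacing $z$ by $-z$ gives equality.
\end{itemize}
Therefore $\mathcal{T}(u)=\Gamma$, so the solution set is non-empty.
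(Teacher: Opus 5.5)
Your proof is correct, apart from the harmless step of discarding linearly dependent $w_j$ so that $\mathcal{B}_m\cong\R^m$. The paper does not prove this theorem; it quotes it from Showalter, where it is proved by essentially your route: continuity of $\mathcal{T}$ on finite-dimensional subspaces, Galerkin solutions via Brouwer's fixed point theorem, and a weak-limit argument closed by monotonicity and hemicontinuity (Minty's trick).
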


We apply Minty's theorem to find a degenerate weak solution of~\eqref{Dirichlet prob.} as follows.  Let $\mathcal{B}=QH^{1,p}_0(v,\Omega)$. Given $\vecu=(u,\nabla u)$ and $\vecw=(w,\nabla w)$ in $QH^{1,p}_0(v,\Omega)$, first define the operator $\mathcal{T}:QH^{1,p}_0(v,\Omega)\rightarrow QH^{1,p}_0(v,\Omega)^*$ by
\begin{equation*}
    \langle\mathcal{T}(\vecu),\vecw\rangle
    =\int_\Omega\big|\sqrt{Q} \nabla u \big|^{p-2} Q\nabla u \cdot \nabla w\,dx
    + \int_\Omega F|u|^{p-2}u w \, vdx.
\end{equation*}
Now define the functional $\Gamma$ by
\begin{equation*}
    \langle\Gamma,\vecw\rangle=\int_\Omega|f|^{p-2}fw \,v dx
    + \int_\Omega |g|^{p-2}g\vect \cdot \nabla w \,vdx;
\end{equation*}
since $f,\,g\in L^p(v,\Omega)$, Hölder's inequality and the subunit condition \eqref{eqn:subunit} shows that $\Gamma$ is well defined on $QH^{1,p}(v,\Omega)$. Thus, $\vecu=(u,\nabla u)$ is a degenerate weak solution of \eqref{Dirichlet prob.} if and only if
\begin{equation*}
    \langle\mathcal{T}(\vecu),\vecw\rangle=\langle\Gamma,\vecw\rangle
\end{equation*}
for all $\vecw\in QH^{1,p}_0(v,\Omega)$. Hence, by Theorem~\ref{Minty's}, we have that such a pair $\vecu = (u,\nabla u)$ exists if $\Gamma\in QH^{1,p}_0(v,\Omega)^*$ and if $\mathcal{T}$ is a bounded, monotone, hemicontinuous, almost coercive operator. We will demonstrate each of these in turn in the following five lemmas.

\begin{lemma}\label{lem:functionalsr}
Given $1< p<\infty$, $\Gamma\in QH^{1,p}_0(v,\Omega)^*$: that is, $\Gamma$ is a bounded linear functional on $QH^{1,p}_0(v,\Omega)$.
\end{lemma}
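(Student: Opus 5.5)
We need to check two things: that $\Gamma$ is linear on $QH^{1,p}_0(v,\Omega)$, and that it satisfies a bound of the form $|\langle\Gamma,\vecw\rangle|\le C\|\vecw\|_{QH^{1,p}(v,\Omega)}$.

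\emph{Linearity.} This follows from linearity of the integral together with Remark~\ref{remark:grad-linear}. For pairs $\vecw_1=(w_1,\nabla w_1)$ and $\vecw_2=(w_2,\nabla w_2)$ and scalars $a,b$, we have $\nabla(aw_1+bw_2)=a\nabla w_1+b\nabla w_2$, because the degenerate gradient is linear in the pair. Hence $\langle \Gamma, a\vecw_1+b\vecw_2\rangle = a\langle\Gamma,\vecw_1\rangle + b\langle\Gamma,\vecw_2\rangle$.

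\emph{Boundedness.} We estimate the two terms separately, using H\"older's inequality with exponents $p'$ and $p$ in $L^p(v,\Omega)$. Since $(p-1)p'=p$, we have $\||f|^{p-1}\|_{L^{p'}(v,\Omega)}=\|f\|^{p-1}_{L^p(v,\Omega)}$, so the first term is bounded by $\|f\|_{L^p(v,\Omega)}^{p-1}\|w\|_{L^p(v,\Omega)}$. For the second term, H\"older's inequality gives the bound $\|g\|^{p-1}_{L^p(v,\Omega)}\|\vect\cdot\nabla w\|_{L^p(v,\Omega)}$.

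To control $\|\vect\cdot\nabla w\|_{L^p(v,\Omega)}$, the subunit inequality~\eqref{eqn:subunit} is assumed only for Lipschitz $\varphi$, so we first extend it to $\vecw\in QH^{1,p}_0(v,\Omega)$. Take $\varphi_k\in Q\lip_0(v,\Omega)$ with $\varphi_k\to w$ in $L^p(v,\Omega)$ and $\nabla\varphi_k\to\nabla w$ in $QL^p(\Omega)$. Applying~\eqref{eqn:subunit} to $\varphi_k-\varphi_j$ shows that $\{\vect\cdot\nabla\varphi_k\}$ is Cauchy in $L^p(v,\Omega)$, so it converges there to some $h$.

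The main obstacle is identifying this limit $h$ with $\vect\cdot\nabla w$, since $\nabla w$ need not be a genuine gradient. Pass to a subsequence along which $\sqrt{Q}\nabla\varphi_k\to\sqrt{Q}\nabla w$ a.e.\ and $\vect\cdot\nabla\varphi_k\to h$ $v$-a.e. The difficulty is that where $Q$ degenerates, convergence of $\sqrt{Q}\nabla\varphi_k$ does not give convergence of $\nabla\varphi_k$ itself. Two routes seem available:
\begin{itemize}
\item Interpret $\vect\cdot\nabla w$ in the weak formulation as this $L^p(v,\Omega)$-limit, which is the natural meaning for degenerate gradients, consistent with~\cite{SC-DCU-FED-SR-Zeren}.
\item Under the pointwise subunit condition, $|\vect\cdot\xi|\le v^{-1/2}|\sqrt{Q}\xi|$ forces $\vect\cdot\xi$ to depend only on $\sqrt{Q}\xi$, so the limit is genuinely $\vect\cdot\nabla w$.
\end{itemize}
Either way, letting $k\to\infty$ gives $\|\vect\cdot\nabla w\|_{L^p(v,\Omega)}\le C(\vect)\|\nabla w\|_{QL^p(\Omega)}$.

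Combining the two term estimates,
\[ |\langle\Gamma,\vecw\rangle| \le \big(\|f\|^{p-1}_{L^p(v,\Omega)}+C(\vect)\|g\|^{p-1}_{L^p(v,\Omega)}\big)\|\vecw\|_{QH^{1,p}(v,\Omega)}, \]
which shows $\Gamma\in QH^{1,p}_0(v,\Omega)^*$.
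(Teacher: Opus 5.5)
Your proof is correct and follows the paper's argument: linearity, then H\"older's inequality on each term (using $(p-1)p'=p$), then the subunit bound~\eqref{eqn:subunit}, giving the constant $\|f\|^{p-1}_{L^p(v,\Omega)}+C(\vect)\|g\|^{p-1}_{L^p(v,\Omega)}$. The one addition is your density argument extending~\eqref{eqn:subunit} from $Q\lip_0(v,\Omega)$ to all $\vecw\in QH^{1,p}_0(v,\Omega)$, which the paper uses without comment; your first route works, since the limit $h$ is independent of the approximating sequence by the same Cauchy estimate and equals $\vect\cdot\nabla\varphi$ for $\varphi\in Q\lip_0(v,\Omega)$, and those are the only test functions the weak formulation requires.
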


\begin{proof}
Since the gradient, the dot product and the integral are all linear, it follows at once that $\Gamma$ is a linear functional.  Therefore, we only need to show that it is bounded.
%
But this follows from H\"older's inequality and our assumption that $f,\,g\in L^p(v,\Omega)$: arguing as we did in the proof of Lemma~\ref{Lemma5.1}, by H\"older's inequality and \eqref{eqn:subunit}  we have that for $\vecw = (w,\nabla w)\in QH_0^{1,p}(v,\Omega)$,
\begin{align*}
|\langle\Gamma,\vecw\rangle|
&\leq \int_\Omega|f|^{p-1}w\, vdx 
+ \int_\Omega |g|^{p-1}|\vect\cdot \nabla w|\, vdx \\
&\leq \|f\|^{p-1}_{L^p(v,\Omega)}\|w\|_{L^p(v,\Omega)} 
+ \|g\|^{p-1}_{L^p(v,\Omega)}\|\vect \cdot \nabla w\|_{L^p(v,\Omega)} \\
& \leq  \|f\|^{p-1}_{L^p(v,\Omega)}\|w\|_{L^p(v,\Omega)} 
+ C(\vect)\|g\|^{p-1}_{L^p(v,\Omega)}\|\nabla w \|_{QL^p(\Omega)} \\
&\leq \big[\|f\|^{p-1}_{L^p(v,\Omega)}+C(\vect)\|g\|^{p-1}_{L^p(v,\Omega)}\big]\|\vecw\|_{QH^{1,p}_0(v,\Omega)}.
\end{align*}
This completes the proof.
\end{proof}

\begin{lemma}\label{lem:boundedsr}
    Given $1< p<\infty$, the operator $\mathcal{T} : QH^{1,p}_0(v,\Omega) \to QH^{1,p}_0(v,\Omega)^*$ is bounded.
\end{lemma}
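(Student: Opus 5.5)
We will show directly that $\mathcal{T}$ maps bounded sets to bounded sets by estimating the dual norm $\|\mathcal{T}(\vecu)\|_{QH^{1,p}_0(v,\Omega)^*}$ by a function of $\|\vecu\|_{QH^{1,p}_0(v,\Omega)}$. Fix $\vecu=(u,\nabla u)$ with $\|\vecu\|_{QH^{1,p}_0(v,\Omega)}\leq M$. We must show that $|\langle\mathcal{T}(\vecu),\vecw\rangle|\leq C(M)\|\vecw\|_{QH^{1,p}_0(v,\Omega)}$ for all $\vecw$.

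First we check that $\mathcal{T}(\vecu)$ is in fact a linear functional on $QH^{1,p}_0(v,\Omega)$. Linearity in $\vecw$ is immediate from the linearity of the degenerate weak gradient (Remark~\ref{remark:grad-linear}) and of the integral.

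For the first term, we use that $Q$ is symmetric and positive semidefinite, so
\[
Q\nabla u\cdot\nabla w=\sqrt{Q}\nabla u\cdot\sqrt{Q}\nabla w .
\]
The integrand is then bounded by $|\sqrt{Q}\nabla u|^{p-1}|\sqrt{Q}\nabla w|$. H\"older's inequality with exponents $p'=\frac{p}{p-1}$ and $p$ gives the bound
\[
\|\nabla u\|_{QL^p(\Omega)}^{p-1}\|\nabla w\|_{QL^p(\Omega)}.
\]

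For the zero order term, we use $F\in L^\infty(v,\Omega)$ and H\"older's inequality with respect to the measure $v\,dx$, which gives the bound
\[
\|F\|_{L^\infty(v,\Omega)}\|u\|_{L^p(v,\Omega)}^{p-1}\|w\|_{L^p(v,\Omega)}.
\]
This is exactly the estimate already carried out after~\eqref{weak soln.}.

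Adding the two bounds yields
\[
|\langle\mathcal{T}(\vecu),\vecw\rangle|\leq \big(1+\|F\|_{L^\infty(v,\Omega)}\big)\,\|\vecu\|_{QH^{1,p}_0(v,\Omega)}^{p-1}\,\|\vecw\|_{QH^{1,p}_0(v,\Omega)}.
\]
Hence $\|\mathcal{T}(\vecu)\|_{QH^{1,p}_0(v,\Omega)^*}\leq (1+\|F\|_{L^\infty(v,\Omega)})M^{p-1}$. This shows both that $\mathcal{T}$ maps into the dual space and that it is bounded.

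There is no real obstacle here. The only point needing care is the identity $Q\xi\cdot\eta=\sqrt{Q}\xi\cdot\sqrt{Q}\eta$ together with the correct exponent bookkeeping: we need $\big(|\sqrt{Q}\nabla u|^{p-1}\big)^{p'}=|\sqrt{Q}\nabla u|^{p}$ and $\big(|u|^{p-1}\big)^{p'}=|u|^p$.
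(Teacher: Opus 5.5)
Your proposal is correct and follows essentially the same argument as the paper: H\"older's inequality applied separately to the second-order term (in $QL^p(\Omega)$) and to the zero-order term (with respect to $v\,dx$, using $F\in L^\infty(v,\Omega)$). Both routes give the same bound $\big(1+\|F\|_{L^\infty(v,\Omega)}\big)\|\vecu\|^{p-1}_{QH^{1,p}_0(v,\Omega)}\|\vecw\|_{QH^{1,p}_0(v,\Omega)}$; your explicit checks of linearity in $\vecw$ and of the identity $Q\xi\cdot\eta=\sqrt{Q}\xi\cdot\sqrt{Q}\eta$ simply make explicit what the paper leaves implicit.
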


\begin{proof}
Fix $\vecu=(u,\nabla u)\in QH^{1,p}_0(v,\Omega)$, $\vecw=(w,\nabla w)\in QH^{1,p}_0(v,\Omega)$.  Then by H\"older's inequality and the assumption that $F\in L^\infty(v,\Omega)$ and is non-negative, we have that
\begin{align*}
 |\langle\mathcal{T}(\vecu),\vecw\rangle|
 &\leq \int_\Omega|\sqrt{Q}\nabla u|^{p-1}|\sqrt{Q}\nabla w|\,dx +\int_\Omega F|u|^{p-1}|w|\, v dx \\
    &\leq \|\sqrt{Q}\nabla u\|^{p-1}_{L^p(\Omega)}\|\sqrt{Q}\nabla w\|_{L^p(\Omega)}
    +\|F\|_{L^\infty(v,\Omega)}\|u\|^{p-1}_{L^p(v,\Omega)}\|w\|_{L^p(v,\Omega)}\\
    &=\|\nabla u\|^{p-1}_{QL^p(\Omega)}\|\nabla w\|_{QL^p(\Omega)}+\|F\|_{L^\infty(v,\Omega)}\|u\|^{p-1}_{L^p(v,\Omega)}\|w\|_{L^p(v,\Omega)}\\
    & \leq \big[1+\|F\|_{L^\infty(v,\Omega)}\big]\|\vecu\|^{p-1}_{QH^{1,p}_0(v,\Omega)}\|\vecw\|_{QH^{1,p}_0(v,\Omega)}.
\end{align*}
It follows at once that $\mathcal{T}$ is bounded.
\end{proof}

In the next two lemmas we prove that the operator $\mathcal{T}$ is monotone and hemicontinuous.  In these proofs, as well as in the proof of uniqueness below, we will make use of special estimates for vectors and scalars (e.g.,~\eqref{eqn:lindqvist 1}) that can be found in \cite{MR3931688}.  The interested reader can find similar estimates in \cite{MDJ2001,GM1975}  that can be used to derive ours.

\begin{lemma}\label{lem:monotonesr}
   Given $1< p<\infty$, the operator $\mathcal{T}$ is monotone.
\end{lemma}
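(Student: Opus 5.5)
We need to show that for $\vecu=(u,\nabla u)$ and $\vecw=(w,\nabla w)$ in $QH^{1,p}_0(v,\Omega)$,
\[ \langle\mathcal{T}(\vecu)-\mathcal{T}(\vecw),\vecu-\vecw\rangle\ge 0 . \]
By the linearity of the degenerate weak gradient (Remark~\ref{remark:grad-linear}), $\nabla(u-w)=\nabla u-\nabla w$. The pairing therefore splits into two integrals:
\[ \int_\Omega \big(|\sqrt{Q}\nabla u|^{p-2}Q\nabla u-|\sqrt{Q}\nabla w|^{p-2}Q\nabla w\big)\cdot(\nabla u-\nabla w)\,dx \]
plus
\[ \int_\Omega F\big(|u|^{p-2}u-|w|^{p-2}w\big)(u-w)\,v\,dx . \]
Both are finite by the H\"older estimates used in Lemma~\ref{lem:boundedsr}. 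It suffices to show each integrand is pointwise non-negative almost everywhere.

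For the first integrand, the key is to move everything onto the vectors $\xi=\sqrt{Q(x)}\nabla u(x)$ and $\eta=\sqrt{Q(x)}\nabla w(x)$ in $\R^n$. Since $Q$ is symmetric and positive semidefinite, $\sqrt{Q}$ is symmetric, so $Q\nabla u\cdot \nabla w=\sqrt{Q}\nabla u\cdot\sqrt{Q}\nabla w$. The integrand thus becomes
\[ \big(|\xi|^{p-2}\xi-|\eta|^{p-2}\eta\big)\cdot(\xi-\eta). \]
I would then invoke the standard vector inequality (the one labelled \eqref{eqn:lindqvist 1} from \cite{MR3931688}) giving non-negativity of this expression. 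If I wanted to avoid citing it, I would prove it directly. Expanding gives
\[ |\xi|^p+|\eta|^p-(|\xi|^{p-2}+|\eta|^{p-2})\,\xi\cdot\eta . \]
By Cauchy--Schwarz this is at least
\[ |\xi|^p+|\eta|^p-|\xi|^{p-1}|\eta|-|\eta|^{p-1}|\xi|=(|\xi|^{p-1}-|\eta|^{p-1})(|\xi|-|\eta|), \]
which is $\ge 0$ because $t\mapsto t^{p-1}$ is increasing for $p>1$. When $\xi$ or $\eta$ vanishes, the term $|\cdot|^{p-2}\cdot$ is interpreted as $0$ (relevant for $p<2$), and the inequality is trivial.

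For the second integrand, the scalar map $t\mapsto |t|^{p-2}t$ is non-decreasing on $\R$. Hence $(|u|^{p-2}u-|w|^{p-2}w)(u-w)\ge 0$ pointwise, and multiplying by $F\ge 0$ and $v\ge0$ preserves the sign. Summing the two non-negative integrals gives monotonicity.

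The only delicate point, and it is minor, is justifying the rewriting in terms of $\sqrt{Q}$. This uses symmetry of $\sqrt Q$ and the fact that $\nabla u$, $\nabla w$ are merely elements of $QL^p(\Omega)$, not true gradients. All manipulations are pointwise a.e., so this causes no difficulty.
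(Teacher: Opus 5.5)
Your proof is correct and follows the same route as the paper: split the pairing into the gradient and zero-order integrals, rewrite the first integrand as $(|\xi|^{p-2}\xi-|\eta|^{p-2}\eta)\cdot(\xi-\eta)$ with $\xi=\sqrt{Q}\nabla u$, $\eta=\sqrt{Q}\nabla w$, and use pointwise non-negativity of this expression and of its scalar analogue, with the latter weighted by $F\,v\ge 0$. The only difference is that the paper simply cites Lindqvist for the vector inequality, whereas you supply the Cauchy--Schwarz argument yourself; that is the better thing to rely on, because \eqref{eqn:lindqvist 1} is actually the Lipschitz-type bound used for hemicontinuity, not the monotonicity inequality.
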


\begin{proof}
Fix $\vecu=(u,\nabla u)\in QH^{1,p}_0(v,\Omega)$, $\vecw=(w,\nabla w)\in QH^{1,p}_0(v,\Omega)$. Then we have that
\begin{align*}
    & \langle\mathcal{T}(\vecu)-\mathcal{T}(\vecw),\vecu-\vecw\rangle \\
    &\qquad \qquad =\langle\mathcal{T}(\vecu),\vecu-\vecw\rangle-\langle\mathcal{T}(\vecw),\vecu-\vecw\rangle\\
    &\qquad \qquad =\int_\Omega|\sqrt{Q}\nabla u|^{p-2}Q\nabla u\cdot (\nabla u-\nabla w)  \,dx 
    - \int_\Omega|\sqrt{Q}\nabla w|^{p-2}Q\nabla w\cdot (\nabla u-\nabla w)  \,dx\\
    &\qquad \qquad \qquad +\int_\Omega F|u|^{p-2}u(u-w)  \,v dx - \int_\Omega F|w|^{p-2}w(u-w)  \, v dx\\
    &\qquad \qquad =\int_\Omega \big(|\sqrt{Q}\nabla u|^{p-2}Q\nabla u
    -|\sqrt{Q}\nabla w|^{p-2}Q\nabla w \big)\cdot (\nabla u-\nabla w)\,dx\\
        &\qquad \qquad \qquad +\int_\Omega F \big(|u|^{p-2} u-|w|^{p-2}w\big)(u-w) \,v dx.
 \end{align*}
 We estimate the final two integrals in turn.  To estimate the first, let $\langle\cdot,\cdot\rangle_{\R^n}$ denote the inner product on $\R^n$. For each $x\in \Omega$, the integrand of the first integral is of the form 
\begin{equation*}
    \langle|s|^{p-2}s - |r|^{p-2}r,s-r\rangle_{\R^n},
\end{equation*}
where $s,\,r\in\R^n$.  Since $p>1$, this quantity is always non-negative: see~\cite[Chapter 12, p.~74]{MR3931688}).  It follows at once that the first integral is non-negative.  We estimate the second integral in essentially the same way.  When $n=1$, we have that for all $s,\,r \in \R$,
\begin{equation*}
    \big( |s|^{p-2}s - |r|^{p-2}r\big)(s-r) \geq 0.
\end{equation*}
   Since $F$ is a non-negative function, it follows that the second integral is also non-negative.  If we combine these two estimates, we see that $\mathcal{T}$ is monotone.
\end{proof}

\begin{lemma}\label{lem:hemicontinuitysr}
    Given $1<p<\infty$, the operator $\mathcal{T}$ is hemicontinuous.
\end{lemma}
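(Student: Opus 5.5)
To prove hemicontinuity, fix $\vecu=(u,\nabla u)$ and $\vecw=(w,\nabla w)$ in $QH^{1,p}_0(v,\Omega)$. By Remark~\ref{remark:grad-linear}, $\nabla(u+zw)=\nabla u+z\nabla w$, so the map in question is
\[
h(z)=\langle\mathcal{T}(\vecu+z\vecw),\vecw\rangle
=\int_\Omega \big|\sqrt{Q}(\nabla u+z\nabla w)\big|^{p-2}Q(\nabla u+z\nabla w)\cdot \nabla w\,dx
+\int_\Omega F|u+zw|^{p-2}(u+zw)\,w\,vdx .
\]
The plan is to show that each integral is a continuous function of $z\in\R$. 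We fix a sequence $z_k\to z$ and apply the dominated convergence theorem to each integral separately.

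\emph{Pointwise convergence.} Rewrite the first integrand as
\[
\big|\sqrt{Q}\nabla u+z\sqrt{Q}\nabla w\big|^{p-2}\big(\sqrt{Q}\nabla u+z\sqrt{Q}\nabla w\big)\cdot \sqrt{Q}\nabla w ,
\]
using the symmetry of $\sqrt{Q}$. For a.e.\ $x$ this is $\Phi(a+zb)\cdot b$, where $a=\sqrt{Q}\nabla u(x)$, $b=\sqrt{Q}\nabla w(x)$, and $\Phi(\xi)=|\xi|^{p-2}\xi$. The map $\Phi$ is continuous on $\R^n$ for every $p>1$: when $1<p<2$ we interpret $\Phi(0)=0$, and $|\Phi(\xi)|=|\xi|^{p-1}\to0$ as $\xi\to 0$. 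Hence the first integrand at $z_k$ converges pointwise a.e.\ to the integrand at $z$. The second integrand is handled the same way with the scalar function $t\mapsto |t|^{p-2}t$.

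\emph{Domination.} Assume $|z_k|\le M$ for all $k$. Using $|a+zb|^{p-1}\le 2^{p-1}(|a|^{p-1}+M^{p-1}|b|^{p-1})$, the first integrand is bounded by
\[
2^{p-1}\big(|\sqrt{Q}\nabla u|^{p-1}+M^{p-1}|\sqrt{Q}\nabla w|^{p-1}\big)|\sqrt{Q}\nabla w| .
\]
This is integrable by H\"older's inequality with exponents $p/(p-1)$ and $p$, since $\nabla u,\nabla w\in QL^p(\Omega)$. Similarly, the second integrand is bounded by
\[
2^{p-1}\|F\|_{L^\infty(v,\Omega)}\big(|u|^{p-1}+M^{p-1}|w|^{p-1}\big)|w|\,v ,
\]
which is integrable since $u,w\in L^p(v,\Omega)$. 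Dominated convergence then gives $h(z_k)\to h(z)$, which is the required continuity.

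The only delicate point is continuity of $\Phi$ at the origin when $1<p<2$, where $|\xi|^{p-2}$ blows up. This is resolved by working with the combined quantity $|\xi|^{p-2}\xi$, whose modulus is $|\xi|^{p-1}$. Everything else is routine.
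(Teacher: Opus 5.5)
Your proof is correct, but it takes a different route from the paper. The paper does not use dominated convergence. Instead, it estimates the difference $|\langle\mathcal{T}(\vecu+z\vecw)-\mathcal{T}(\vecu+y\vecw),\vecw\rangle|$ directly, using algebraic inequalities for $\xi\mapsto|\xi|^{p-2}\xi$, and splits into two cases:
\begin{itemize}
\item For $p\ge 2$ it uses $\big||r|^{p-2}r-|s|^{p-2}s\big|\le (p-1)|r-s|(|r|^{p-2}+|s|^{p-2})$ together with H\"older's inequality with exponents $\frac p2,\frac{p}{p-2}$. This gives a bound of order $|z-y|$ for $z,y$ in bounded sets.
\item For $1<p<2$ it uses $\big||r|^{p-2}r-|s|^{p-2}s\big|\le 2^{2-p}|r-s|^{p-1}$, which gives a bound of order $|z-y|^{p-1}$ independent of $\vecu$.
\end{itemize}
The paper's approach yields an explicit modulus of continuity: $z\mapsto\langle\mathcal{T}(\vecu+z\vecw),\vecw\rangle$ is locally Lipschitz for $p\ge 2$ and globally H\"older of order $p-1$ for $1<p<2$. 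That is more than Minty's theorem needs.

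Your argument needs only the continuity of $\xi\mapsto|\xi|^{p-2}\xi$, including at the origin when $p<2$, which you handle correctly. It also uses the $(p-1)$-growth bound to supply the dominating function. So it avoids both the case split and the vector inequalities. It would also carry over unchanged to any Carath\'eodory nonlinearity with $(p-1)$-growth. The price is that it is purely qualitative.

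One point is worth stating explicitly. Treat the second integral as an integral against the measure $v\,dx$. Then the bound $F\le\|F\|_{L^\infty(v,\Omega)}$ and the pointwise convergence of $|u+z_kw|^{p-2}(u+z_kw)w$ only need to hold $v$-a.e., which is all that membership in $L^\infty(v,\Omega)$ and $L^p(v,\Omega)$ gives you.
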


\begin{proof}
    Let $z,y\in \R$ and let $\vecu=(u,\nabla u), \vecw=(w,\nabla w)$. To simplify  notation, set $\bphi=\nabla u+z\nabla w$, $\bpsi=\nabla u+y\nabla w$, $\Phi=u+zw$ and $\Psi=u+yw$. Then we have that
\begin{align}\label{eqn 4}
    & \big|\langle\mathcal{T}(\vecu+z\vecw)-\mathcal{T}(\vecu+y\vecw), \vecw\rangle\big| \notag \\
    & \qquad \qquad =\big|\langle\mathcal{T}(\vecu+z\vecw), \vecw\rangle-\langle\mathcal{T}(\vecu+y\vecw), \vecw\rangle \big| \notag\\
    &\qquad \qquad  \leq \bigg|\int_\Omega\big[\big|\sqrt{Q}(\nabla u+z\nabla w)\big|^{p-2} Q(\nabla u+z\nabla w) \cdot\nabla w\notag \\
    &\qquad \qquad \qquad - \big|\sqrt{Q}(\nabla u+y\nabla w)\big|^{p-2} Q(\nabla u+y\nabla w))\cdot \nabla w \big] \, dx \bigg| \notag\\
    &\qquad \qquad \qquad+ \bigg|\int_\Omega\big[ F|u+zw|^{p-2}(u+zw)\cdot w
   - F|u+yw|^{p-2} (u+yw)\cdot w\big]\, v dx\bigg| \notag\\
    &\qquad \qquad =\bigg| \int_\Omega \big[\big|\sqrt{Q}\bphi\big|^{p-2}  \sqrt{Q}\bphi \cdot \sqrt{Q}\nabla w
    - \big|\sqrt{Q}\bpsi\big|^{p-2}\sqrt{Q}\bpsi \cdot \sqrt{Q}\nabla w\big]\,dx\bigg| \notag\\
    &\qquad \qquad \qquad +\bigg| \int_\Omega \big[ F|\Phi|^{p-2} \Phi\cdot w - F|\Psi|^{p-2} \Psi\cdot w\big] \,v dx\bigg| \notag\\
      &\qquad \qquad  \leq  \int_\Omega \big||\sqrt{Q}\bphi|^{p-2}\sqrt{Q}\bphi 
      - |\sqrt{Q}\bpsi|^{p-2}\sqrt{Q}\bpsi\big| |\sqrt{Q}\nabla w|\,dx  \\
    &\qquad \qquad \qquad +\|F\|_{L^\infty(v,\Omega)}\int_\Omega \big||\Phi|^{p-2}\Phi - |\Psi|^{p-2}\Psi\big||w| \,v dx.\notag
\end{align}

To estimate~\eqref{eqn 4} we will consider two cases: $p\ge 2$ and $1<p<2$. If $p\ge2$, then by (\cite[Chapter 10, p.~99]{MR3931688}) we have that  for $r,s\in \R^n$,
\begin{equation}\label{eqn:lindqvist 1}
    \big||r|^{p-2}r-|s|^{p-2}s\big|\leq (p-1)|r-s|\big(|s|^{p-2}+|r|^{p-2}\big).
\end{equation}
The same inequality holds if $r,\,s$ are scalars. (A slightly different version of this inequality is proved, but the proof can be easily modified to prove~\eqref{eqn:lindqvist 1}.)  
Furthermore, if we let $r=\sqrt{Q}\bphi$ and $s= \sqrt{Q}\bpsi$, then we immediately have that $r-s= (z-y)\sqrt{Q} \nabla w$.  
Therefore, we can  apply~\eqref{eqn:lindqvist 1} 
to~\eqref{eqn 4} to get
\begin{align}\label{ineq 7}
   &  |\langle\mathcal{T}(\vecu+z\vecw)-\mathcal{T}(\vecu+y\vecw), \vecw\rangle| \notag \\
      &\qquad \qquad \leq (p-1)|z-y|\int_\Omega\big|\sqrt{Q}\nabla w\big|^2\big[|\sqrt{Q}\bpsi|^{p-2}+|\sqrt{Q}\bphi|^{p-2}\big]\,dx\\
    &\qquad \qquad \qquad+(p-1)|z-y|\|F\|_{L^\infty(v,\Omega)}\int_\Omega|w|^2\big[|\Psi|^{p-2}+|\Phi|^{p-2}\big] \,v dx.\notag
\end{align}

If $p=2$, then both integrals on the right-hand side are finite, so the right-hand side tends to $0$ as $z\rightarrow y$. Hence, $\mathcal T$ is hemicontinuous.  If $p>2$, we estimate each of these integrals in turn.  Using Hölder's inequality with exponents $\frac{p}{2}$ and $\frac{p}{p-2}$ we get
%
\begin{multline*}
\int_\Omega\big|\sqrt{Q}\nabla w\big|^2\big[|\sqrt{Q}\bpsi|^{p-2}+|\sqrt{Q}\bphi|^{p-2}\big]\,dx
\leq \|\nabla w\|^2_{QL^p(\Omega)} \||\sqrt{Q}\bpsi|^{p-2}+|\sqrt{Q}\bphi|^{p-2} \|_{L^{\frac{p}{p-2}}(\Omega)} \\
 \leq \|\nabla w\|^2_{QL^p(\Omega)}\big(\|\bpsi\|^{p-2}_{QL^p(\Omega)}+\|\bphi\|^{p-2}_{QL^p(\Omega)}\big).
\end{multline*}
Similarly, we have that 
\begin{equation*} \int_\Omega|w|^2\big[|\Psi|^{p-2}+|\Phi|^{p-2}\big]\, vdx
\leq\|w\|^2_{L^p(v,\Omega)}\big(\|\Psi\|^{p-2}_{L^p(v, \Omega)}+\|\Phi\|^{p-2}_{L^p(v,\Omega)}\big).\\
\end{equation*}
It follows from their definitions that $\bphi,\,\bpsi \in QL^p(\Omega)$ and $\Phi,\,\Psi \in L^p(v,\Omega)$.  Thus, the integrals 
on the right-hand side of~\eqref{ineq 7} are finite, and so the right-hand side  converges to $0$ as $z\rightarrow y$.  Therefore, $\mathcal T$ is hemicontinuous for $p\geq 2$.

\medskip

Now suppose $1<p<2$. Then again from (\cite[chapter 12, p.~98]{MR3931688}) we have that for $r,s\in \R^n$,
\begin{equation*}
    \big||s|^{p-2}s-|r|^{p-2}r\big|\leq 2^{2-p}|s-r|^{p-1};
\end{equation*}
again, the same inequality holds if $r,\,s$ are scalars.  Therefore, if we apply this inequality to~\eqref{eqn 4} and then use H\"older's inequality with exponents $p$ and $p'$, we get
\begin{align*}
     |\langle\mathcal{T}(\vecu+z\vecw)-\mathcal{T}(\vecu+y\vecw), \vecw\rangle|&\leq \bigg|\int_\Omega\sqrt{Q}\nabla w\cdot\big[|\bpsi|^{p-2}\bpsi-|\bphi|^{p-2}\bphi\big]\,dx\bigg|\\
     &\qquad \qquad +\|F\|_{L^\infty(v,\Omega)} \bigg|\int_\Omega w\cdot\big[|\Psi|^{p-2}\Psi-|\Phi|^{p-2}\Phi\big]v\,dx\bigg|\\
     &\leq \int_\Omega \big|\sqrt{Q}\nabla w\big| \big||\bpsi|^{p-2}\bpsi-|\bphi|^{p-2}
     \bphi\big|\,dx\\
     &\qquad \qquad+\|F\|_{L^\infty(v,\Omega)} \int_\Omega |w| \big||\Psi|^{p-2}\Psi-|\Phi|^{p-2}\Phi\big|v\,dx\\
     &\leq C \int_\Omega\big|\sqrt{Q}\nabla w\big||\bpsi-\bphi|^{p-1}\,dx\\
     &\qquad \qquad+ C \|F\|_{L^\infty(v,\Omega)}\int_\Omega|w||\Psi-\Phi|^{p-1}\,dx.\\
  & \leq C\|\nabla w\|_{QL^p(\Omega)}\|\bpsi-\bphi\|^{p-1}_{L^p(\Omega)} \\
  & \qquad \qquad +C\|F\|_{L^\infty(v,\Omega)}\|w\|_{L^p(v,\Omega)}\|\Psi-\Phi\|^{p-1}_{L^p(\Omega)} \\ 
  &=C|z-y|^{p-1}\|\nabla w\|^{p-1}_{QL^p(\Omega)} \\ 
  & \qquad +C|z-y|^{p-1}\|F\|_{L^\infty(v,\Omega)}\|w\|^{p-1}_{L^p(v,\Omega)}.
\end{align*}
The final term tends to zero as $z\rightarrow y$, so $\mathcal{T}$ is hemicontinuous when $1<p<2$.
\end{proof}

\begin{lemma} \label{lemma:coercive}
    Given $1<p<\infty$, if $(v,Q)$ has the $p$-Sobolev property on $\Omega$, then the operator $\mathcal{T}$ is almost coercive.
\end{lemma}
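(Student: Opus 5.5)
We need to show that there is $\lambda>0$ with $\langle\mathcal T(\vecu),\vecu\rangle\ge\langle\Gamma,\vecu\rangle$ whenever $\|\vecu\|_{QH^{1,p}_0(v,\Omega)}>\lambda$. The strategy is to bound $\langle\mathcal T(\vecu),\vecu\rangle$ below by a multiple of $\|\vecu\|^p$, bound $\langle\Gamma,\vecu\rangle$ above by a multiple of $\|\vecu\|$, and use $p>1$.

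\textbf{Lower bound for $\mathcal T$.} Since $F\ge0$,
\[ \langle\mathcal T(\vecu),\vecu\rangle=\|\nabla u\|^p_{QL^p(\Omega)}+\int_\Omega F|u|^p\,v\,dx\ge\|\nabla u\|^p_{QL^p(\Omega)}. \]
The zero order term gives no coercivity, because $F$ may vanish; control of $\|u\|_{L^p(v,\Omega)}$ must come from the Sobolev inequality. By Lemma~\ref{lemma:approximation}, \eqref{Sobolev prop} holds on all of $QH^{1,p}_0(v,\Omega)$, so
\[ \|\vecu\|_{QH^{1,p}_0(v,\Omega)}=\|u\|_{L^p(v,\Omega)}+\|\nabla u\|_{QL^p(\Omega)}\le(1+S(p,1))\|\nabla u\|_{QL^p(\Omega)}. \]
Hence $\langle\mathcal T(\vecu),\vecu\rangle\ge(1+S(p,1))^{-p}\|\vecu\|^p_{QH^{1,p}_0(v,\Omega)}$.

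\textbf{Upper bound for $\Gamma$.} By the computation in Lemma~\ref{lem:functionalsr},
\[ |\langle\Gamma,\vecu\rangle|\le K\|\vecu\|_{QH^{1,p}_0(v,\Omega)},\qquad K=\|f\|^{p-1}_{L^p(v,\Omega)}+C(\vect)\|g\|^{p-1}_{L^p(v,\Omega)}. \]

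\textbf{Conclusion.} Since $p>1$, the inequality $(1+S(p,1))^{-p}t^p\ge Kt$ holds whenever $t^{p-1}\ge K(1+S(p,1))^p$. So choose
\[ \lambda=\big(K(1+S(p,1))^p\big)^{1/(p-1)}, \]
or any positive number if $K=0$. Then $\|\vecu\|_{QH^{1,p}_0(v,\Omega)}>\lambda$ gives $\langle\mathcal T(\vecu),\vecu\rangle\ge\langle\Gamma,\vecu\rangle$.

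The only real obstacle is the missing control of the $L^p(v,\Omega)$ part of the norm, and invoking \eqref{Sobolev prop} through Lemma~\ref{lemma:approximation} is exactly what resolves it.
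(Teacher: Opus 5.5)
Your proposal is correct and follows essentially the same route as the paper. Like the paper, you drop the $F$-term since $F\geq 0$, then use Lemma~\ref{lemma:approximation} and \eqref{Sobolev prop} to get $\langle\mathcal{T}(\vecu),\vecu\rangle\geq c\|\vecu\|^p_{QH^{1,p}_0(v,\Omega)}$, bound $|\langle\Gamma,\vecu\rangle|$ linearly in $\|\vecu\|_{QH^{1,p}_0(v,\Omega)}$, and conclude using $p>1$. Your constant $(1+S(p,1))^p$ is in fact the right one; the paper's constant $S(p,1)^p+1$ in \eqref{eqn:coercive-bound} tacitly uses $(a+b)^p\leq a^p+b^p$ and so is off by a factor $2^{p-1}$, which is harmless for the argument.
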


\begin{remark}
    We note that in the proof of the existence of solutions to the Dirichlet problem~\eqref{Dirichlet prob.}, it is only in the proof of Lemma~\ref{lemma:coercive} that we use the $p$-Sobolev property.  We also need it to prove regularity:  see below.
\end{remark}

\begin{proof}
    We will show that there exists a $\lambda>0$ sufficiently large, depending on the data functions $f$ and $g$ used to define $\Gamma$, such that  for any $\vecu \in QH^{1,p}_0(v,\Omega)$ with $\|\vecu\|_{QH^{1,p}_0(v,\Omega)}>\lambda$, $\langle\mathcal{T}(\vecu),\vecu\rangle>\langle\Gamma,\vecu\rangle$. Fix  $\vecu=(u,\nabla u)\in QH^{1,p}_0(v,\Omega)$ such that $\|\vecu\|_{QH^{1,p}_0(v,\Omega)}>0$.   By Lemma~\ref{lemma:approximation} we can apply the Sobolev inequality~\eqref{Sobolev prop} to $(u,\nabla u)$ while also testing $\vecu$ against itself in the definition of weak solution to get
\begin{multline*}
    \|u\|^p_{L^p(v,\Omega)}
    \leq S(p,1)^p\|\nabla u\|^p_{QL^p(\Omega)}\\
    \leq S(p,1)^p\left(\|\nabla u\|^p_{QL^p(\Omega)}+\int_\Omega F|u|^p\,vdx\right)
    =S(p,1)^p\langle\mathcal{T}(\vecu),\vecu\rangle;
\end{multline*}
the second inequality holds since $F$ is non-negative.  It follows at once from this estimate that
\begin{equation} \label{eqn:coercive-bound}
    \|\vecu\|^p_{QH^{1,p}_0(v,\Omega)}\leq (S(p,1)^p+1)\langle\mathcal{T}(\vecu),\vecu\rangle.
\end{equation}
Since $p>1$, by Hölder's inequality, \eqref{eqn:subunit}, and \eqref{eqn:coercive-bound} we have
\begin{align*}
    |\langle\Gamma,\vecu\rangle|
    =\bigg|\int_\Omega|f|^{p-2}f&u\,vdx+ \int_\Omega |g|^{p-2}g \, \vect \cdot \nabla u\, vdx\bigg|\\
    &\leq \Big(\|f\|^{p-1}_{L^p(v,\Omega)}+C(\vect)\|g\|^{p-1}_{L^p(v,\Omega)}\Big)\|\vecu\|_{QH^{1,p}_0(v,\Omega)}\\
    &\leq (S(p,1)^p+1)\Big(\|f\|^{p-1}_{L^p(v,\Omega)}+C(\vect)\|g\|^{p-1}_{L^p(v,\Omega)}\Big)\|\vecu\|^{1-p}_{QH^{1,p}_0(v,\Omega)}
    \langle\mathcal{T}(\vecu),\vecu\rangle.
\end{align*}

We now consider two cases.  If  $f\not\equiv0$ or $g\not\equiv 0$, then $|\langle\Gamma,\vecu\rangle|<\langle\mathcal{T}(\vecu),\vecu\rangle$ provided that
\begin{equation*}
    (S(p,1)^p+1)\Big(\|f\|^{p-1}_{L^p(v,\Omega)}+C(\vect)\|g\|^{p-1}_{L^p(v,\Omega)}\Big)\|\vecu\|^{1-p}_{QH^{1,p}_0(v,\Omega)}<1, 
\end{equation*}
or, equivalently, 
\[ \|\vecu\|_{QH^{1,p}_0(v,\Omega)} > (S(p,1)^p+1)^{\frac{1}{p-1}}\Big(\|f\|^{p-1}_{L^p(v,\Omega)}+C(\vect)\|g\|^{p-1}_{L^p(v,\Omega)} \Big)^\frac{1}{p-1}.  \]
On the other hand, if $f\equiv 0\equiv g$, then $\Gamma=0\in(QH^{1,p}_0(v,\Omega))^*$. Hence, if we let $\lambda=1$, then by inequality~\eqref{eqn:coercive-bound},
\[ |\langle\Gamma,\vecu\rangle|= 0 <\langle\mathcal{T}(\vecu),\vecu\rangle. \]
Thus,  for any $f\in L^p(v,\Omega)$, $\mathcal{T}$ is almost coercive with 
\begin{equation*}
    \lambda=\max\Big\{1,(S(p,1)^p+1)^{\frac{1}{p-1}}\Big(\|f\|^{p-1}_{L^p(v,\Omega)}+C(\vect)\|g\|^{p-1}_{L^p(v,\Omega)}\Big)^\frac{1}{p-1}\Big\}.
\end{equation*}
\end{proof}

It follows from the above lemmas that there exists a degenerate weak solution $(u,\nabla u)\in QH^{1,p}_0(v,\Omega)$ of the Dirichlet problem~\eqref{Dirichlet prob.}.  We now show that the solutions satisfy~\eqref{regularity}.  To do so, we argue as in the proof of Lemma~\ref{Lemma5.1}, using $(u,\nabla u)$ as a test function.  Then from inequality~\eqref{eqn:caccioppoli2} we have that
\begin{align*}
    \|\nabla u\|_{QL^p(\Omega)}^p
    & \leq \|f\|_{L^p(v,\Omega)}^{p-1} \|u\|_{L^p(v,\Omega)} 
    + C(\vect)  \|g\|_{L^p(v,\Omega)}^{p-1} \|\nabla u\|_{QL^p(\Omega)}. \\
    \intertext{By~\eqref{Sobolev prop} and Young's inequality}
    & \leq \big[S(p,1) \|f\|_{L^p(v,\Omega)}^{p-1} 
    + C(\vect)  \|g\|_{L^p(v,\Omega)}^{p-1}  \big] \|\nabla u\|_{QL^p(\Omega)} \\
    & \leq 2^{\frac{p'}{p}}\big[S(p,1) \|f\|_{L^p(v,\Omega)}^{p-1} 
    + C(\vect)  \|g\|_{L^p(v,\Omega)}^{p-1}  \big]^{p'}
    + \tfrac{1}{2}\|\nabla u\|_{QL^p(\Omega)}^p.
\end{align*}
If we rearrange terms and take the $p$-th root of both sides, we get
\begin{equation} \label{eqn:grad-regularity2} 
\|\nabla u\|_{QL^p(\Omega)} 
\leq C\big[ \|f\|_{L^p(v,\Omega)}^{p-1}  + \|g\|_{L^p(v,\Omega)}^{p-1} ]^{\frac{1}{p-1}}. 
\end{equation}
The Caccioppoli-type inequality~\eqref{eqn:grad-regularity} follows immediately.  Moreover, by the Sobolev inequality~\eqref{Sobolev prop}, we get inequality~\eqref{regularity}. 

\section{Uniqueness of solutions}
\label{section:unique}

In this section we complete the proof of Theorem~\ref{equivalence} by showing  that if $(v,Q)$ has the $p$-Sobolev and $p$-Dirichlet properties, then any solution of the Dirichlet problem~\eqref{Dirichlet prob.} is unique.  To prove this we adapt an argument from~\cite{Yaremenko}.

Suppose to the contrary that solutions are not unique:  that for some coefficients and data, there exist two degenerate weak solutions $(u_1,\nabla u_1),\, (u_2,\nabla u_2) \in QH^{1,p}_0(v,\Omega)$. Define $(w,\nabla w) =(u_1-u_2, \nabla u_1-\nabla u_2) \in QH^{1,p}_0(v,\Omega)$.  (Note that by Remark~\ref{remark:grad-linear}, $\nabla(u_1-u_2)=\nabla u_1-\nabla u_2$.)    By Lemma~\ref{test func.}, we can use $(w, \nabla w)$ as a test function in the definition of a degenerate weak solution \eqref{weak soln.}. If we do this for both $(u_1,\nabla u_1)$ and $(u_2,\nabla u_2)$ and take the difference of the two equations, we get
\begin{multline}\label{eqn3}
    \int_\Omega\big(|\sqrt{Q}\nabla u_1|^{p-2}\sqrt{Q}\nabla u_1-|\sqrt{Q}\nabla u_2|^{p-2}\sqrt{Q}\nabla u_2\big)\cdot\sqrt{Q}\nabla(u_1-u_2)\, dx \\
    + \int_\Omega F\big(|u_1|^{p-2}u_1 - |u_2|^{p-2}u_2\big) (u_1-u_2) \,v dx
=0.
\end{multline}
We now apply the following inequality (see~\cite[Chapter~12, p.~100]{MR3931688}):  for $1< p<\infty$ and all $a,\,b \in \R^n$,
\begin{equation*}
    (|b|^{p-2}b-|a|^{p-2}a)\cdot(b-a)\geq 0,
\end{equation*}
and the inequality is strict  if $a\neq b$.   The corresponding inequality for scalars is also true.  Therefore, the integrands on the left-hand side of~\eqref{eqn3} are both non-negative.  Hence,  if $\sqrt{Q}\nabla u_1(x) \neq \sqrt{Q}\nabla u_2(x)$ on a set of positive measure, then the left-hand side of~\eqref{eqn3} is positive, which contradicts the fact that it is equal to $0$.  Therefore, $\sqrt{Q}\nabla u_1 = \sqrt{Q}\nabla u_2$ a.e.  

A similar argument shows $u_1=u_2$ $v$-almost everywhere, but only if we assume that $F$ is positive on the set where $u_1 \neq u_2$.  To avoid this, we instead use the Sobolev inequality.   By inequality~\eqref{Sobolev prop} we have that
\begin{equation*}
        \int_\Omega|u_1-u_2|^{p} \,vdx\leq S(p,1)^p\int_\Omega\big|\sqrt{Q}\nabla (u_1-u_2)\big|^p \,dx= 0.   
\end{equation*}
 Hence,  $u_1-u_2=0$  $v$-a.e. Thus, $(u_1,\nabla u_1) = (u_2, \nabla u_2)$ in $QH_0^{1,p}(v,\Omega)$, and so the solution to the Dirichlet problem \eqref{Dirichlet prob.} is unique.

\bibliographystyle{plain}
\bibliography{bibliography}

\end{document}